\documentclass[11pt, oneside]{amsart}

\newcommand{\hide}[1]{}

\usepackage{amssymb}
\usepackage{graphicx}
\usepackage{amsmath}
\usepackage[normalem]{ulem}

\usepackage{mathrsfs}

\usepackage{geometry}
\usepackage{caption}

\usepackage{float}

\usepackage{bm}

\usepackage{wrapfig}

\usepackage{xcolor}

\usepackage{enumitem}

\usepackage{tikz}
\usetikzlibrary{arrows, arrows.meta}

\def\textcolor#1{}

\newcommand{\R}{\mathbb{R}}

\newcommand{\Hh}{\mathbb{H}}
\renewcommand{\S}{\mathbb{S}}

\newcommand{\sA}{\mathscr{A}}
\newcommand{\sM}{\mathscr{V}}

\newcommand{\Ss}{\mathcal{S}}
\newcommand{\Bb}{\mathcal{B}}

\newcommand{\rr}{\mathfrak{r}}

\renewcommand{\rho}{\varrho}
\renewcommand{\phi}{\varphi}
\newcommand{\eps}{\varepsilon}
\renewcommand{\theta}{\vartheta}

\DeclareMathOperator{\Rr}{{\mathcal R}}

\DeclareMathOperator{\sn}{sn}
\DeclareMathOperator{\cn}{cn}

\DeclareMathOperator{\dist}{dist}

\renewcommand{\ge}{\geqslant}
\renewcommand{\le}{\leqslant}

\renewcommand{\geq}{\geqslant}

\usepackage[colorlinks=true,urlcolor=blue]{hyperref}

\theoremstyle{theorem}
\newtheorem{theorem}{Theorem}[section]
\newtheorem{lemma}[theorem]{Lemma}

\newtheorem*{BorisenkoConj}{Borisenko's Conjecture}
\newtheorem*{main}{Main Theorem}

\newtheoremstyle{Intro}
{}
{}
{\itshape}
{}
{\scshape}
{.}
{.5em}
{}

\theoremstyle{Intro}

\theoremstyle{definition}
\newtheorem{definition}[theorem]{Definition}

\theoremstyle{remark}
\newtheorem*{remark}{\textsc{Remark}}

\newcounter{reminder}

\newtheoremstyle{claim}
  {}
  {}
  {\itshape}
  {0pt}
  {\scshape}
  {.}
  { }
  {\thmname{#1}\thmnumber{ #2}\thmnote{ (#3)}}

\theoremstyle{claim}

\numberwithin{equation}{section}

\definecolor{barriergreen}{RGB}{2,43,58}
\definecolor{bodycolor}{RGB}{31,122,140}
\definecolor{neighbourblue}{RGB}{202,160,82}
\definecolor{labelgreen}{RGB}{2,43,58}

\definecolor{barriergreen}{RGB}{2,43,58}      
\definecolor{bodycolor}{RGB}{224,122,95}      
\definecolor{neighbourblue}{RGB}{139,112,177} 
\definecolor{labelgreen}{RGB}{2,43,58}        

\definecolor{neighbourblue}{RGB}{112,153,121} 

\definecolor{neighbourblue}{RGB}{76,175,80} 

\definecolor{ttzzqq}{rgb}
{0.12156862745098039, 0.47843137254901963, 0.5490196078431373}

\title[A complete solution to the reverse isoperimetric problem]{A complete solution to the reverse isoperimetric problem for $\lambda$-convex bodies}

\author[K. Drach]{Kostiantyn Drach}

\author[K. Tatarko]{Kateryna Tatarko}

	\thanks{{\it \ 2020 Mathematics Subject Classification:} 52A30, 52A38, 53C40 (Primary); 52A27, 52A40, 52B60, 53C21 (Secondary)}\thanks{
    \indent The first author is partially supported by Agencia Estatal de Investigaci\'on grants PID2023-147252NB-I00, CNS2025-166633, and by the Severo Ochoa and Mar\'ia de Maeztu Program for Centers and Units of Excellence in R\&D (CEX2020-001084-M); the second author is partially supported by NSERC Discovery Grant number 2022-02961. }

\thanks{
	{ \it Keywords: } $\lambda$-convexity; reverse isoperimetric inequality; inradius.}

\date{}

\address{Departament de Matem{\`a}tiques i Inform{\`a}tica, Universitat de Barcelona, Gran Via de les Corts Catalanes 585, 08007 Barcelona, Spain \newline\hspace*{1em}
Centre de Recerca Matem{\`a}tica, Edifici C, Carrer de l'Albareda, 08193 Bellaterra, Spain}
\email{kostiantyn.drach@ub.edu}
\address{Department of Pure Mathematics, University of Waterloo, Waterloo, ON, N2L 3G1, Canada}
\email{ktatarko@uwaterloo.ca}

\begin{document}

\maketitle

\begin{center}
    {\it  Dedicated to Alexander Borisenko on the occasion of his 80th birthday.}
\end{center}

\vspace{0.3 cm}

\begin{abstract}

For a given $\lambda > 0$, a convex body in $\mathbb{R}^n$ is called $\lambda$-convex if it is the intersection of balls of radius $1/\lambda$. We show that among all $\lambda$-convex bodies in $\mathbb R^n$ of a given surface area, volume is minimized by the $\lambda$-convex lens, that is, the intersection of two balls of radius $1/\lambda$. This confirms a conjecture of Borisenko in all dimensions $n \geq 2$.  Moreover, we extend this reverse isoperimetric inequality to all model spaces of constant curvature in arbitrary dimension and arbitrary $\lambda > 0$.
\end{abstract}

\section{introduction}

A convex body in $\R^n$ is a compact convex set with non-empty interior. Given $\lambda>0$, a convex body $K\subset\R^n$ is called \emph{$\lambda$-convex} if, at every point of its boundary, it is locally contained in a ball of radius $1/\lambda$ whose boundary passes through that point. More precisely, for each $p \in \partial K$ there exist a neighborhood $U_p$ of $p$ and a ball $\mathcal B_{\lambda}$ of radius $1/\lambda$ such that $p \in \partial \mathcal B_{\lambda}$ and $U_p \cap K\subset \mathcal B_{\lambda}$ (see Figure~\ref{Fig:Def}). When $\partial K$ is smooth, this condition is equivalent to requiring that its principal curvatures, computed with respect to the inward-pointing unit normal, satisfy $k_i(p)\geq\lambda$ for every $p\in\partial K$ and $i=1,\ldots,n-1$. 

By the classical Blaschke Rolling Theorem \cite{Bla56} (see also \cite{DrBla} and references therein for a historical account of this result), every $\lambda$-convex body is an intersection of balls of radius $1/\lambda$. 

One of the key examples of a $\lambda$-convex body is a \emph{$\lambda$-convex lens} defined as the intersection of two balls of radius $1/\lambda$ (see Figure~\ref{Fig:Lens}). 

The geometry of $\lambda$-convex bodies has been studied extensively in convex and discrete geometry over the past two decades. We refer the reader to \cite{BLN} and more recent \cite[Section 6]{AA} for surveys on $\lambda$-convexity and related questions (see also \cite{BLNP, AAF}).

\begin{figure}
\begin{tikzpicture}[x=0.55cm,y=0.55cm,  line width=1.4pt,line cap=round,line join=round,
  every node/.style={inner sep=0pt}]
\coordinate (p) at (3.354,7.558);
\begin{scope}
  \clip (-0.15,0) rectangle (10.863,11.0);
  \path[draw=ttzzqq,draw opacity=1.,
        fill=ttzzqq,fill opacity=.1]
    (10,0.086) circle[radius=10];
\end{scope}
\path[draw=bodycolor,fill=bodycolor,fill opacity=0.60]
  (p)
  .. controls (3.836,7.986) and (4.534,8.200) .. (5.212,8.284)
  .. controls (5.891,8.368) and (6.490,8.330) .. (7.155,8.210)
  .. controls (7.820,8.091) and (8.552,7.890) .. (9.002,7.440)
  .. controls (9.452,6.991) and (9.619,6.294) .. (9.669,5.590)
  .. controls (9.718,4.887) and (9.649,4.178) .. (9.504,3.523)
  .. controls (9.359,2.869) and (9.138,2.269) .. (8.763,1.714)
  .. controls (8.389,1.159) and (7.861,0.649) .. (7.234,0.457)
  .. controls (6.607,0.265) and (5.880,0.391) .. (5.268,0.686)
  .. controls (4.656,0.981) and (4.159,1.445) .. (3.756,1.973)
  .. controls (3.354,2.501) and (3.045,3.093) .. (2.856,3.725)
  .. controls (2.667,4.358) and (2.599,5.030) .. (2.645,5.714)
  .. controls (2.692,6.397) and (2.873,7.130) .. (p)
  -- cycle;
\path[draw=neighbourblue,draw opacity=0.80, fill=neighbourblue,fill opacity=0.15]
  (p) circle[radius=2.371];
\fill (p) circle[radius=0.175];
\node at (5.26,1.94) {$K$};
\node[anchor=north west] at (3.08,7.36) {$p\in\partial K$};
\node[text=neighbourblue] at (2.33,8.75) {$U_p$};
\node[text=ttzzqq] at (8.19,10.53) {$\partial \mathcal B_\lambda$};
\node[text=ttzzqq, text opacity=0.6] at (1.5,1.33) {$\mathcal B_\lambda$};
\end{tikzpicture}
    \caption{The definition of $\lambda$-convexity.}
    \label{Fig:Def}
\end{figure}

This paper focuses on a \emph{reverse} isoperimetric problem under curvature constraints. The classical isoperimetric inequality implies that among all $\lambda$-convex bodies of a given
surface area, the Euclidean ball has the largest volume. The reverse problem of determining the smallest possible volume under the same surface area constraint is one of the central open problems in the field. In particular, Borisenko conjectured  (see \cite{CDT, DrPhD}) that a $\lambda$-convex lens is the unique minimizer. For a more detailed discussion, we refer the reader to the introductions in \cite{DT, DTProc}.

\begin{BorisenkoConj}
Let $n \ge 2$ and $\lambda > 0$.  Let $K \subset \mathbb{R}^n$ be a $\lambda$-convex body and let $L \subset \R^n$ be a $\lambda$-convex lens. If $|\partial K| = |\partial L|$, then $|K| \ge |L|$, with equality if and only if $K$ is a $\lambda$-convex lens.
\end{BorisenkoConj}
Here $|A|$ denotes the $n$-dimensional volume of a measurable set  $A\subset \R^n$ and $|\partial A|$ denotes the $(n-1)$-dimensional volume of its boundary.

For $n=2$, Borisenko and Drach resolved the conjecture in \cite{BorDr14} (see also the alternative proof in \cite{FKV}), and for $ n=3$, Drach and Tatarko confirmed it in \cite{DT}. The corresponding problem was also fully resolved in all two-dimensional model spaces of constant curvature and in certain natural generalizations of these spaces in \cite{BorDr15_1, DrLob, BorNA}. Recently, it was confirmed in every three-dimensional space form in \cite{DTS}. We also note that in \cite{Julian}, it was shown that the solution to Borisenko's conjecture in Euclidean, spherical, and hyperbolic spaces cannot have a smooth boundary.

In this paper, we prove Borisenko's conjecture in every dimension $n\ge 2$. Moreover, we establish the corresponding reverse isoperimetric inequality in all model spaces of constant curvature.

\begin{main}\label{Thm:Main1}
Borisenko's Conjecture is true in $\mathbb{R}^n$, as well as in spherical space $\S^{n}$ and hyperbolic space $\mathbb{H}^n$, for any $n \ge 2$ and $\lambda >0$.
\end{main}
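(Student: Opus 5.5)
The plan is to reduce the volume comparison to a comparison of inner parallel bodies, using the inradius as the governing parameter. For a convex body $K$ let $r(K)$ be its inradius and $K_{-t}=\{x\in K:\dist(x,\partial K)\ge t\}$, $0\le t\le r(K)$, its inner parallel bodies. The coarea formula gives
\[
|K|=\int_0^{r(K)}|\partial K_{-t}|\,dt,
\]
and the same identity holds for the lens $L$. In $\S^n$ and $\Hh^n$ the identity holds verbatim, since it only uses that the distance function to $\partial K$ is $1$-Lipschitz with unit gradient a.e. So it suffices to prove two statements under the normalization $|\partial K|=|\partial L|$.
\begin{enumerate}
\item[(A)] An inradius comparison: $r(K)\ge r(L)$.
\item[(B)] A monotone comparison of boundary areas: $|\partial K_{-t}|\ge|\partial L_{-t}|$ for all $0\le t\le r(L)$.
\end{enumerate}
Integrating (B) over $[0,r(L)]$ and using (A) gives $|K|\ge|L|$. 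Equality forces equality in (B) for a.e.\ $t$, and I would then trace that equality back to $K$ being a lens.

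For (A), I would use the fact that a $\lambda$-convex body with inradius $r$ is contained in the $\lambda$-convex hull of its insphere together with two antipodal contact points. Equivalently, by the Blaschke-type rolling argument already quoted for the Main Theorem, every point of $K$ lies within a spindle-shaped region determined by $r$ and $\lambda$. This gives an upper bound $|\partial K|\le F_\lambda(r)$, where $F_\lambda$ is an explicit increasing function. The key claim is that $F_\lambda(r)$ equals the surface area of the lens of inradius $r$, i.e.\ the lens is the extremal body for the area at fixed inradius. Since $|\partial L|=F_\lambda(r(L))$ and $F_\lambda$ is increasing, (A) follows. The containment argument is where the $\lambda$-convexity enters essentially: the supporting balls of radius $1/\lambda$ at the touching points trap $K$. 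In the space forms the same argument works with geodesic balls of curvature $\lambda$, and the comparison functions are computed with $\sn$/$\cn$-type expressions.

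For (B), the plan is to show that the ratio $t\mapsto|\partial K_{-t}|/|\partial L_{-t}|$ is nondecreasing on $[0,r(L)]$; since it equals $1$ at $t=0$, (B) follows. Formally, $-\frac{d}{dt}|\partial K_{-t}|=\int_{\partial K_{-t}}H$ with $H$ the mean curvature (sum of principal curvatures). Along normal flow inward, each principal curvature of a $\lambda$-convex boundary satisfies the Riccati comparison $k_i(t)\ge \lambda/(1-\lambda t)$, with the appropriate $\cn/\sn$ modification in $\S^n$ and $\Hh^n$. The lens realizes equality on its smooth part, and the concentration of the lens's area loss at its edge is what must be matched.

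I expect (B) to be the main obstacle. Inner parallel bodies are not smooth: the focal and cut locus of $\partial K$ creates corners, and the pointwise curvature bound runs the wrong way for an upper bound on area loss. So I would not differentiate naively. I would first try to parametrize $\partial K_{-t}$ by the inward normal map from $\partial K$, using the nearest-point retraction, which is $1$-Lipschitz onto convex sets. I would then bound the Jacobian of the map $\partial K\to\partial K_{-t}$ from below by the corresponding quantity for the lens, where the worst case is the edge of the lens. Should this Jacobian estimate fail, the fallback is to compare via the Steiner-type formula for the lens, and to prove the ratio monotonicity for polytopal approximations by intersections of finitely many $1/\lambda$-balls, passing to the limit.
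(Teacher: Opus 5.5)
\paragraph{The gap is step (B).} Your reduction $|K|=\int_0^{r(K)}|\partial K_{-t}|\,dt$ is fine, but (B) carries the entire content of the theorem, and the proposal contains no argument for it. The tools you mention all point the wrong way. Along the inward normal flow the area element is multiplied by $\prod_i(1-tk_i)\le(1-\lambda t)^{n-1}$, and $K_{-t}+tB\subseteq K$ likewise gives only upper bounds for $|\partial K_{-t}|$.

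To see what (B) asks, take a $\lambda$-convex polytope $K=\bigcap_i B(c_i,1/\lambda)$. Then $K_{-t}=\bigcap_i B(c_i,1/\lambda-t)$ is $\lambda/(1-\lambda t)$-convex. To first order in $t$, the area lost is $t$ times $(n-1)\lambda|\partial K|+\int 2\tan(\beta/2)$, where the integral runs over the $(n-2)$-dimensional ridges and $\beta$ is the exterior dihedral angle. By an ODE comparison in $t$, (B) would follow from this statement: among $\lambda'$-convex bodies of fixed area, for every $\lambda'\ge\lambda$, the lens maximizes this inner area-loss rate.

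That is a reverse extremal inequality for the ridge term, of the same nature as Borisenko's conjecture itself, and nothing you cite implies it. Because $\tan$ is convex, concentrating the dihedral turning on few ridges increases this term. Even in the plane one needs a separate argument that no corner carries more than half of the total corner turning. Your Jacobian idea does not address this either. The nearest-point map $\partial K\to\partial K_{-t}$ collapses strips of width about $t\tan(\beta/2)$ along each ridge, and that collapsed part is exactly what would have to be compared with the lens.

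\paragraph{Step (A).} The statement is true: it is Theorem~\ref{thm:reverse-inradius}, which you can simply cite. The argument you sketch for it is false, because a $\lambda$-convex body need not lie in any lens of inradius $r(K)$. Suppose no two contact points of the insphere are antipodal, e.g.\ a planar $\lambda$-convex ``triangle'' whose three contact points are at mutual angles $2\pi/3$. Then the width of $K$ exceeds $2r(K)$ in every direction. A lens of inradius $r(K)$ has width $2r(K)$ along its axis, so it cannot contain $K$.

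What you actually have is $K\subseteq\bigcap_{q\in Q}\Bb_q$ for a set $Q$ of contact directions not contained in an open hemisphere. Replacing $Q$ by an antipodal pair $\pm e$ is exactly what the Voronoi comparison of Lemma~\ref{lem:spherical-comparison} does. With $h=A_r$ (normalization $\lambda=1$) it gives $|\partial K|\le\int_{\S^{n-1}}A_r(d_Q)\,d\omega\le\int_{\S^{n-1}}A_r(d_{\pm e})\,d\omega=|\partial L_r|$. In $\Hh^n$ with $\lambda<\sqrt{-c}$ and $r\ge\rr_\lambda$ there is no lens of inradius $r$ at all, so your $F_\lambda$ is not even defined there.

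\paragraph{How the paper proceeds.} The paper uses the same spherical lemma but never touches inner parallel bodies. In polar coordinates about the incenter it writes $|K|=\int V_r(\Theta)\,d\omega$ and $|\partial K|\le\int A_r(\Theta)\,d\omega$, with $\Theta\le d_Q$, using radial angle comparison. It then dominates the superlevel sets of $\Theta$ by those of $d_{\pm e}$. Next it compares $K$ with the truncated lens of radial function $\ell_r(\min\{d_{\pm e},T_0\})$ having the same area, using that $V_r'/A_r'$ is increasing. Finally it lowers the inradius parameter from $r$ to $r_0$ via the monotonicity of $\sM$. No inequality of type (B) is ever needed.
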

The proof employs the reverse inradius inequality from \cite{DT} with the spherical comparison used in \cite{BezdekConj}.

The paper is organized as follows. In Section~\ref{sec:prelim}, we collect the necessary background on $\lambda$-convexity. In Section~\ref{sec:euclidean-proof}, we prove the main theorem in $\mathbb{R}^n$, where the main ideas of the proof can be presented more transparently. Then, in Section~\ref{sec:model-proof}, we extend the argument to the spherical and hyperbolic spaces $\S^{n}$ and $\mathbb{H}^n$. In the hyperbolic case $\mathbb H^n$, which we assume to be of curvature $-1$, extra care will be given to the \emph{equidistant case} of $\lambda \in (0,1)$. This is due to the fact that, contrary to the case when $\lambda \ge 1$, an intersection of two convex regions bounded by hypersurfaces of constant normal curvature $\lambda$ may not be compact.

\section{Preliminaries}
\label{sec:prelim}

We denote by $M^n(c)$ the complete simply connected $n$-dimensional Riemannian manifold of constant curvature $c$. We call $M^n(c)$ a \emph{space form} (of curvature $c$). Thus, $M^n(c)$ is the standard Euclidean space $\R^n$ when $c = 0$, spherical space $\mathbb S^n(c)$ if $c > 0$ and hyperbolic space $\Hh^n(c)$ if $c < 0$.

For a given $\lambda > 0$, a \emph{$\lambda$-sphere} is a complete totally umbilical hypersurface $\Ss_\lambda \subset M^n(c)$ of constant normal curvature equal to $\lambda$. A \emph{$\lambda$-ball} $\Bb_\lambda$ is the convex and topologically closed region bounded by $\Ss_\lambda$.

When $c \ge 0$, $\lambda$-spheres are geodesic spheres. If $c<0$, $\Ss_\lambda$ is a geodesic sphere if $\lambda > \sqrt{-c}$, a horosphere if $\lambda = \sqrt{-c}$, and an equidistant hypersurface (or equidistant for short) if $\lambda \in (0,\sqrt{-c})$. Here, an equidistant hypersurface is one of the two connected components of the set of points at a given distance from a totally geodesic hyperplane.

\subsection{$\lambda$-convex bodies}	
\label{SSec:LConv}

Fix some $\lambda>0$. 

\begin{definition}[$\lambda$-convex body]
\label{Def:LambdaConvex}
A convex body $K \subset M^n(c)$ is \emph{$\lambda$-convex} if for each $p \in \partial K$ there exists a neighborhood $U_p \subset M^n(c)$ and a $\lambda$-sphere $\Ss_\lambda$ that passes through $p$ in such a way that
\[
U_p \cap \partial K \subset \Bb_\lambda,
\]
where $\Bb_\lambda$ is the corresponding $\lambda$-ball (see Figure~\ref{Fig:Def}).
\end{definition}

In this case, we call $\Ss_\lambda$ a \emph{supporting $\lambda$-sphere} passing through $p \in \partial K$.

The following classical theorem of Blaschke turns the local condition in Definition~\ref{Def:LambdaConvex} into a global one; see \cite{Bla56} for the original proof in the Euclidean space and \cite{DrBla} for an overview of further developments and generalizations.

\begin{theorem}[Blaschke's Rolling Theorem]
\label{Thm:Bla}
Let $K \subset M^n(c)$ be a $\lambda$-convex body. Then for every $p \in \partial K$ there exists a supporting $\lambda$-sphere $\Ss_{\lambda}$ bounding the $\lambda$-ball $\Bb_\lambda$ such that $p\in\Ss_\lambda$ and $K\subseteq  \Bb_{\lambda}$. 
\end{theorem}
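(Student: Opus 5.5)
The plan is to reduce the statement to a planar comparison lemma by slicing with totally geodesic $2$-planes, and then prove that lemma with a support-function (Sturm-type) argument. Fix $p\in\partial K$ and take the local supporting $\lambda$-sphere $\Ss_\lambda$ at $p$ given by Definition~\ref{Def:LambdaConvex}.

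\textbf{Choice of the ball.} Since $U_p\cap K\subset\Bb_\lambda$ and $K$ is convex, the outward unit normal $\nu$ of $\Ss_\lambda$ at $p$ is an outward normal of a supporting hyperplane of $K$ at $p$. Let $\Bb_\lambda$ be the $\lambda$-ball whose boundary passes through $p$ with outward normal $\nu$. This is the candidate ball, and the goal is to show $K\subseteq\Bb_\lambda$.

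\textbf{Reduction to dimension two.} Every point of $K$ lies in some totally geodesic $2$-plane $\Pi$ through $p$ that contains the direction $\nu$. It therefore suffices to show $K\cap\Pi\subseteq\Bb_\lambda\cap\Pi$ for each such $\Pi$.

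\begin{itemize}
\item Because $\Pi$ contains the normal to $\Ss_\lambda$ at $p$, the section $\Bb_\lambda\cap\Pi$ is a planar $\lambda$-disk in the $2$-dimensional space form $\Pi\cong M^2(c)$. It is tangent to $\partial(K\cap\Pi)$ at $p$ and has the same normal.
\item At any other point $q\in\partial(K\cap\Pi)$, intersect the local supporting $\lambda$-ball at $q$ with $\Pi$. In a space form, the normal curvature of a $\lambda$-sphere in any direction is $\lambda$. The geodesic curvature of its trace in $\Pi$ is then $\lambda/\cos\alpha\ge\lambda$, where $\alpha$ is the angle between the sphere's normal and $\Pi$.
\item Hence the planar section is locally contained, near $q$, in a disk with boundary curvature $\ge\lambda$ through $q$. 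Such a disk is itself locally contained in a $\lambda$-disk through $q$.
\end{itemize}

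So $K\cap\Pi$ is a locally $\lambda$-convex planar body, and the problem reduces to the case $n=2$ with an arbitrary prescribed tangency point $p$.

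\textbf{The planar lemma.} For $n=2$ I would argue as follows.

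\begin{itemize}
\item Parametrize $\partial K$ by the angle $\theta$ of the outer normal. In $\R^2$, write $h$ for the support function relative to an origin placed at the centre of $\Bb_\lambda$.
\item The local condition says that at every boundary point the curve lies locally inside an osculating-type disk of radius $1/\lambda$. Equivalently, the radius-of-curvature measure satisfies $h+h''\le 1/\lambda$ in the sense of distributions. Corners are allowed: there $h+h''$ has no atom in $\theta$ and the inequality is vacuous.
\item The support function of $\Bb_\lambda$ is $h_B\equiv 1/\lambda$. Set $g=h_B-h$. Then $g+g''\ge0$, with $g(\theta_0)=0$ and $g'(\theta_0)=0$ at the normal direction $\theta_0$ of $p$ (choose any normal in the normal cone there).
\item A Sturm comparison with $\cos(\theta-\theta_0)$ gives $g\ge0$ on $(\theta_0-\pi,\theta_0+\pi)$. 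By continuity this extends to all $\theta$, so $h\le h_B$ and $K\subseteq\Bb_\lambda$.
\end{itemize}

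In $\S^2$ and $\Hh^2$ I would replace the support function by the analogous comparison along the boundary. Concretely, write the boundary in geodesic polar coordinates about the centre of the $\lambda$-disk, or use the Gauss--Bonnet and geodesic-curvature comparison (a Schur-type argument): a curve with curvature measure $\ge\lambda$ starting tangent to a $\lambda$-circle stays inside it until it closes up.

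\textbf{Main obstacle.} The hardest step is the planar comparison in the nonsmooth setting, made rigorous for curvature measures. The most delicate case is the hyperbolic equidistant case $\lambda<\sqrt{-c}$, where $\Bb_\lambda$ is noncompact and the polar-coordinate description degenerates. There I would first try to compare distances to the base geodesic of the equidistant curve: $d(x)=\dist(x,\text{geodesic})$ restricted to $\partial K$. The local condition translates into a differential inequality for $d$ along arc length, and the maximum principle yields $d\le d_\lambda$ on all of $K$.
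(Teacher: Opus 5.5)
The paper does not prove this theorem. It quotes it from Blaschke (the Euclidean case) and from the survey on rolling theorems cited with it, so your proposal has to stand on its own.

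\textbf{What works.} The reduction to dimension two is sound.
Since $\Pi$ contains $\nu$, the set $\Bb_\lambda\cap\Pi$ is a $\lambda$-disk that is a \emph{local} supporting disk of $K\cap\Pi$ at $p$. At other boundary points $q$, Meusnier's theorem gives traces of curvature $\lambda/\cos\alpha$ with $\alpha<\pi/2$; otherwise $U_q\cap K\cap\Pi\subseteq\{q\}$, which is impossible. Because $K$ is strictly convex, $K\cap\Pi$ is either $\{p\}$ or two-dimensional. So everything reduces to a planar claim: every local supporting $\lambda$-disk of a locally $\lambda$-convex body in $M^2(c)$ contains the body. In $\R^2$ your Sturm comparison proves this once $g+g''\ge0$ is established.

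\textbf{The main gap.} You do not prove this planar claim for $c\neq0$. After your reduction it is the whole content of the theorem, and these are exactly the settings where the paper uses it: $\S^n$, and $\Hh^n$ with spheres, horospheres and equidistants. None of the routes you list closes it.

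Geodesic polar coordinates about the centre of the $\lambda$-disk do not exist for horocycles and equidistants. Even for circles, they do not present $\partial K$ as a radial graph when $K$ misses the centre, as for a thin lens and the centre of either of its disks.

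The Schur-type sentence (a curve of curvature $\ge\lambda$ tangent to a $\lambda$-circle stays inside until it closes up) is just the planar theorem restated.

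The maximum principle for the distance to the base geodesic cannot work. Let $\sigma$ be the signed distance to that geodesic, and define $t_\lambda$ by $\sqrt{-c}\tanh(\sqrt{-c}\,t_\lambda)=\lambda$. Suppose $\sigma$ is maximal on $K$ at $q$ with $\sigma(q)>t_\lambda$. Then $K$ lies on the convex side of an equidistant of curvature $\sqrt{-c}\tanh(\sqrt{-c}\,\sigma(q))>\lambda$ touching $\partial K$ at $q$. This is entirely compatible with local $\lambda$-convexity at $q$; a small disk placed far from the geodesic shows it. What rules the situation out is the tangency at $p$. So you need an initial-value comparison propagated along $\partial K$ from $p$: a genuine non-Euclidean analogue of your Sturm step, for instance via the Frenet equations of $\partial K$ in the linear model of $M^2(c)$. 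It must also remain valid for curves with corners.

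\textbf{A smaller point in $\R^2$.} The ``equivalently'' giving $h+h''\le 1/\lambda$ needs an argument. At a corner with normal interval $[\alpha,\beta]$, the local disk from the definition touches $h$ from above at only one angle of $[\alpha,\beta]$. So at (at least one of) $\alpha$ and $\beta$ the definition gives no touching at all. The inequality does follow. One way is to write $1/\lambda-h=w\cos(\theta-\theta^*)$, which turns $g+g''\ge0$ into convexity of $w$ as a function of $\tan(\theta-\theta^*)$. Then use $h\in C^1$ and the countability of the exceptional angles. This is the only place where the local hypothesis actually enters, so it must be written out.
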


\begin{definition}[$\lambda$-convex polytope]
	\label{Def:LambdaConvexPol}
A \emph{$\lambda$-convex polytope} in $M^n(c)$ is a $\lambda$-convex body $K$ given as the intersection of finitely many convex regions $\Bb_\lambda$. The boundary $\partial K$ is piecewise smooth, and the closure of each maximal connected smooth part is called a \emph{facet} of $K$. A facet is of dimension $n-1$ and is equal to the intersection $K \cap \partial \Bb_\lambda$ for some $\Bb_\lambda$.
\end{definition}

\begin{definition}[$\lambda$-convex lens]	\label{Def:LambdaLens}
A \emph{$\lambda$-convex lens} is a $\lambda$-convex body given by the intersection of two sets of the form $\Bb_\lambda$ (see Figure~\ref{Fig:Lens}).
\end{definition}

\begin{figure}[h]
\definecolor{uququq}{rgb}
{0.00784313725490196, 0.16862745098039217, 0.22745098039215686}
\definecolor{ttzzqq}{rgb}
{0.12156862745098039, 0.47843137254901963, 0.5490196078431373}
\begin{tikzpicture}[line cap=round,line join=round,>=triangle 45,x=1.0cm,y=1.0cm]
\clip(-4.,-2.1) rectangle (4.,2.1);
\fill[line width=1.pt,color=ttzzqq,fill=ttzzqq,fill opacity=0.10000000149011612] (-3.842986564339558,0.046330343966855855) -- (3.842986564339557,0.04633034396685584) -- (3.842986564339557,-0.04633034396685584) -- (-3.842986564339558,-0.046330343966855855) -- cycle;
\draw [shift={(0.,-3.0079099572141677)},line width=1.pt,color=ttzzqq,fill=ttzzqq,fill opacity=0.10000000149011612]  plot[domain=0.6715357772283473:2.470056876361446,variable=\t]({1.*4.908862347943025*cos(\t r)+0.*4.908862347943025*sin(\t r)},{0.*4.908862347943025*cos(\t r)+1.*4.908862347943025*sin(\t r)});
\draw [shift={(0.,0.)},line width=1.pt,color=ttzzqq]  plot[domain=-3.239166666629998:0.09757401304020531,variable=\t]({1.*3.861353355677969*cos(\t r)+0.*0.4755768507003521*sin(\t r)},{0.*3.861353355677969*cos(\t r)+1.*0.4755768507003521*sin(\t r)});
\draw [shift={(0.,0.)},line width=0.7pt,dash pattern=on 1.5pt off 2.5pt,color=ttzzqq]  plot[domain=0.09757401304020531:3.0440186405495884,variable=\t]({1.*3.861353355677969*cos(\t r)+0.*0.4755768507003521*sin(\t r)},{0.*3.861353355677969*cos(\t r)+1.*0.4755768507003521*sin(\t r)});
\draw [shift={(0.,3.0079099572141677)},line width=1.pt,color=ttzzqq,fill=ttzzqq,fill opacity=0.10000000149011612]  plot[domain=0.6715357772283473:2.470056876361446,variable=\t]({1.*4.908862347943025*cos(\t r)+0.*4.908862347943025*sin(\t r)},{0.*4.908862347943025*cos(\t r)+-1.*4.908862347943025*sin(\t r)});
\begin{scriptsize}
\draw [fill=uququq, color=uququq] (0.,0.) circle (1.0pt);
\end{scriptsize}
\end{tikzpicture}
\caption{The $\lambda$-convex lens.}
\label{Fig:Lens}
\end{figure}


Note that in our definition, a $\lambda$-convex lens can be a ball (if the two generating sets $\Bb_\lambda$ are balls that coincide). Otherwise, a $\lambda$-convex lens is a $\lambda$-convex polytope that has exactly two facets. Each lens is reflection-symmetric with respect to the totally geodesic hyperplane passing through the intersection of the boundaries of the two facets. 

Let $K \subset M^n(c)$ be a convex body. A geodesic ball of the largest possible radius contained in $K$ is called an \emph{inscribed ball for $K$}. The \emph{inradius of $K$}, denoted by $r(K)$, is the radius of an inscribed ball for $K$. Note that $r(K)$ is well-defined even though a convex body can have several inscribed balls.

We will need the following reverse-inradius inequality, proved in \cite{DT}.

\begin{theorem}[Reverse inradius inequality]
    \label{thm:reverse-inradius}
    Let $n \ge 2$. Let $K \subset M^n(c)$ be a $\lambda$-convex body, and let $L \subset M^n(c)$ be a $\lambda$-convex lens. If $|\partial K| = |\partial L|$, then $r(K) \ge r(L)$. Equivalently, if $r(K) = r(L)$, then $|\partial K| \le |\partial L|$.

    Moreover, equality (in each statement) holds if and only if $K$ is a $\lambda$-convex lens. 
\end{theorem}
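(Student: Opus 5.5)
The plan is to trap $K$ inside a $\lambda$-convex polytope that is circumscribed about an inscribed ball of $K$, and then to compare the surface area of that polytope with that of the lens having the same inradius. I will prove the second formulation: if $r(K)=r(L)=r$, then $|\partial K|\le|\partial L|$. The first formulation follows from it, because the surface area of a lens is strictly increasing in its inradius.

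\textbf{Step 1: confine $K$ to a circumscribed polytope.} Let $B_r$ be an inscribed ball of $K$ with center $o$, and let $\{p_i\}\subset\partial B_r\cap\partial K$ be the contact points. By maximality of $r$, the unit directions $u_i$ from $o$ to $p_i$ do not all lie in an open hemisphere of $T_o$. Otherwise $B_r$ could be pushed slightly away from all contact points and then enlarged.

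At each $p_i$ the tangent hyperplane to $\partial K$ is unique, since the ball $B_r$ lies inside $K$ and touches $\partial K$ there. Hence the supporting $\lambda$-ball from Theorem~\ref{Thm:Bla} is the $\lambda$-ball $\Bb^{(i)}$ tangent to $\partial B_r$ at $p_i$ from outside. By compactness, finitely many contact points already fail to lie in an open hemisphere. Therefore
\[
K\subseteq P:=\bigcap_i \Bb^{(i)},
\]
a $\lambda$-convex polytope circumscribed about $B_r$. Since surface area is monotone under inclusion of convex bodies, $|\partial K|\le|\partial P|$.

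\textbf{Step 2: write $|\partial P|$ as an integral over directions.} All the $\Bb^{(i)}$ are congruent and tangent to the same sphere $\partial B_r$. So the radial function from $o$ of $\partial\Bb^{(i)}$ in a direction $u$ is a fixed function $\rho(\theta)$ of the angle $\theta=\angle(u,u_i)$, and $\rho$ is increasing on $[0,\pi/2]$. Consequently the radial function of $\partial P$ in direction $u$ is $\rho(\theta(u))$, where $\theta(u)=\min_i\angle(u,u_i)$; the facets are exactly the radial images of the spherical Voronoi cells of the $u_i$. This gives
\[
|\partial P|=\int_{\S^{n-1}} F(\theta(u))\,du
\]
for an explicit density $F$, obtained from $\rho$, $\rho'$ and the space-form Jacobian. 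I would check that $F$ is nondecreasing in $\theta$. The lens $L$ of inradius $r$ corresponds to the antipodal pair $\{\pm e\}$. Writing $F(\theta)=F(0)+\int_0^{\theta}F'(t)\,dt$ and using the layer-cake formula, it suffices to prove, for every $t\in[0,\pi/2]$,
\[
\sigma\bigl(\{u:\theta(u)>t\}\bigr)\le \sigma\bigl(\{u:\angle(u,\pm e)>t\}\bigr).
\]
Equivalently, the union of the caps $C(u_i,t)$ has spherical measure at least that of $C(e,t)\cup C(-e,t)$.

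\textbf{Step 3: the covering inequality, which I expect to be the main obstacle.} For $t\le\pi/4$ the inequality is easy. Pick $u_1$. Since $-u_1$ lies in the cone spanned by the $u_i$, some $u_j$ satisfies $\angle(u_1,u_j)\ge\pi/2\ge 2t$, so the caps at $u_1$ and $u_j$ are disjoint and already have total measure $2|C(t)|$.

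For $t\in(\pi/4,\pi/2]$ two caps can overlap, so a finer argument is needed. I would first try two-point (polarization) symmetrization on $\S^{n-1}$ with respect to hyperplanes through $o$. The goal is to show that the measure of the union decreases under a suitable sequence of polarizations, while the configuration is kept outside every open hemisphere. Such a sequence should collapse the configuration onto an antipodal pair. Should that fail, the fallback is to note that the minimal configurations must be minimal non-hemispherical sets, i.e. at most $n$ points having $0$ in their convex hull, and to treat the union of caps for such simplicial configurations directly.

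\textbf{Equality.} Equality forces $K=P$ and forces equality in the covering inequality for almost every $t$. This should leave only antipodal configurations, so that $P$, and hence $K$, is a lens. The equidistant case of $\mathbb H^n$ needs an extra check that $P$ is compact, which holds because the inscribed ball and the non-hemisphere condition bound $P$.
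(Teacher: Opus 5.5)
Your Steps 1 and 2 are sound and match the machinery the paper uses around this theorem. You have $K\subseteq P=\bigcap_i\Bb^{(i)}$, the radial function of $P$ is $\ell_r(d_Q)$ over the spherical Voronoi cells of the contact directions, and $|\partial P|=\int_{\S^{n-1}}A_r(d_Q(\omega))\,d\omega$ as in Subsection~\ref{SSec:Touch}. The monotonicity of $F=A_r$ that you leave to be checked is the computation $A_r'>0$ in Lemmas~\ref{Lem:Rr} and~\ref{Lem:ModelRr}.

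\textbf{The gap is Step 3, which carries the whole comparison.} You prove the cap inequality only for $t\le\pi/4$; for $t\in(\pi/4,\pi/2)$ you give a plan, not an argument. The polarization plan fails as stated. A two-point rearrangement of the centres with respect to a hyperplane $H$ through $o$ sends every centre in $H^-$ whose mirror image is absent into $H^+$. Already for the extremal pair $\{e,-e\}$ and a generic $H$, this yields a configuration in an open hemisphere. For $t$ close to $\pi/2$, its union of caps is strictly smaller than $2|C(e,t)|$. So such polarizations do not preserve the non-hemisphere constraint, and without that constraint the inequality is false (a single cap already violates it). The fallback is not an argument either, and minimal non-hemispherical subsets of $\S^{n-1}$ can have $n+1$ points, not $n$. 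Your equality case rests on the equality case of this same unproved inequality, so it is unproved too.

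\textbf{The missing idea is the Voronoi-cell comparison of Lemma~\ref{lem:spherical-comparison}.} It treats all $t$ at once and makes the layer-cake step unnecessary.
\begin{enumerate}
\item Since the $u_i$ lie in no open hemisphere, $\theta(u)\le\pi/2$. So each Voronoi cell $V_i$ is star-shaped about $u_i$ and lies in the closed hemisphere centred at $u_i$.
\item In polar coordinates $(t,\xi)$ about $u_i$, the cell is $V_i=\{t\le\rho_i(\xi)\}$ with $0<\rho_i\le\pi/2$, and the area density is $\sin^{n-2}t$.
\item Take $F$ nondecreasing and $0<\rho\le\pi/2$. The $\sin^{n-2}$-weighted mean of $F$ over $[0,\rho]$ is at most its mean over $[\rho,\pi/2]$. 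Hence it is at most the mean $\bar F$ over $[0,\pi/2]$, which is a convex combination of the two.
\item So $\int_0^{\rho_i(\xi)}F(t)\sin^{n-2}t\,dt\le\bar F\int_0^{\rho_i(\xi)}\sin^{n-2}t\,dt$. Integrating over $\xi$ and summing over $i$ gives $|\partial P|\le\bar F\,|\S^{n-1}|=\int_{\S^{n-1}}F(d_{\pm e}(\omega))\,d\omega=|\partial L|$.
\end{enumerate}

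This also settles equality. Since $A_r$ is strictly increasing, the one-dimensional inequality is strict whenever $\rho<\pi/2$. So equality forces $\rho_i\equiv\pi/2$ a.e.\ for every $i$, i.e.\ each cell is a closed hemisphere. Hence $Q=\{\pm e\}$, $P$ is a lens, and $K=P$ by strict monotonicity of area under inclusion.

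\textbf{A smaller correction.} Compactness of $P$ in the equidistant case does not follow from the inscribed ball and the non-hemisphere condition alone. For $r\ge\rr_\lambda$, two antipodal contact directions give a non-compact $P$. It holds here because $r=r(L)<\rr_\lambda$, so $\ell_r$ is finite on $[0,\pi/2]$.
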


For $2$-dimensional $\lambda$-convex bodies in $\mathbb{R}^2$, Milka \cite{MilInradius} proved that the $\lambda$-convex lens has the smallest inradius for a given length of the boundary. Much later, this result was generalized in \cite{Dr4} to arbitrary $\lambda$-convex domains in $2$-dimensional Alexandrov spaces of curvature bounded below, thus closing this question in dimension $2$. Bezdek \cite{BezdekConj} showed that among all $\lambda$-convex bodies in $\R^n$, $n\ge 2$, the $\lambda$-convex lens has the smallest inradius for a given \emph{volume}.

\subsection{A spherical lemma.}
\label{subsec:spherical-lemma}

Let $T\subset\mathbb S^{n-1}$ be a finite subset of points in the unit sphere that is not contained in any open hemisphere.

Let $\omega\in\mathbb S^{n-1}$. Define the spherical distance from $\omega$ to the set $T$ as
\begin{equation}\label{spher_dist}
    d_T(\omega) = \textup{dist}_{\mathbb S^{n-1}} (\omega, T) = \min\limits_{x \in T} \textup{dist}_{\mathbb S^{n-1}}(\omega, x)
\end{equation}
where $\text{dist}_{\mathbb S^{n-1}}$ denotes the usual spherical distance.
Fix $e \in \mathbb S^{n-1}$, and denote the distance to the set $\{e, -e\}$ of two antipodal points of $ \S^{n-1}$ by
$$
d_{\pm e}(\omega) = \textup{dist}_{\mathbb S^{n-1}}(\omega, \{e, -e\}).
$$
Since $T$ is not contained in any open hemisphere, then $d_T(\omega) \le \pi/2$ for every $\omega\in\mathbb{S}^{n-1}$. Also, $d_{\pm e}(\omega) \le \pi/2$.

For $\varepsilon \in [0, \pi/2]$, let 
$$
T_{\eps} = \{\omega \in \S^{n-1}: \, d_T(\omega) \le \eps \}
$$
be the spherical $\eps$-neighbourhood of $T$. We write $\{\pm e\}_\varepsilon$ for the corresponding neighbourhood of the pair of antipodal points.  Then we state the following result, which can be found in various forms in \cite{AK,BSch,BezdekConj}. We present its proof here for completeness.

\begin{lemma}
\label{lem:spherical-comparison}
Let $h:[0,\pi/2]\to\mathbb R$ be a non-decreasing function and let $T = \{x_1, \dots, x_m \}\subset\mathbb S^{n-1}$ be a finite subset of points that is not contained in any open hemisphere. Then
$$
\int_{\mathbb S^{n-1}}h\left(d_T(\omega)\right)\,d\omega
\le
\int_{\mathbb S^{n-1}}h\left(d_{\pm e}(\omega)\right)\,d\omega,
$$
where $e\in\mathbb{S}^{n-1}$ is fixed and $d\omega$ denotes spherical surface measure.

In particular, for every $\eps\in[0,\pi/2]$,
$$
|T_{\eps}| \ge |\{\pm e\}_\varepsilon|.
$$
\end{lemma}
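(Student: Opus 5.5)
We reduce the integral inequality to the statement about neighbourhoods, and then prove the neighbourhood statement using caps.

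\emph{Step 1: reduction to level sets.} Replacing $h$ by $h-h(0)$ changes both sides by the same amount $h(0)|\mathbb S^{n-1}|$, so we may assume $h(0)=0$. A non-decreasing function is then a limit (increasing, or via the layer-cake formula) of nonnegative combinations of indicators $\mathbf 1_{(\eps,\pi/2]}$ and $\mathbf 1_{[\eps,\pi/2]}$. Concretely,
\[
\int_{\mathbb S^{n-1}} h(d(\omega))\,d\omega=\int_0^{\pi/2}\bigl|\{\omega: d(\omega)>\eps\}\bigr|\,dh(\eps),
\]
a Stieltjes integral against the nonnegative measure $dh$. Hence the integral inequality follows once we show, for every $\eps\in[0,\pi/2]$,
\[
|\{d_T>\eps\}|\le|\{d_{\pm e}>\eps\}|,
\]
that is, $|T_\eps|\ge|\{\pm e\}_\eps|$. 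The strict versus non-strict level sets differ only by a null set, or we may run the same argument with open neighbourhoods. Conversely, taking $h=\mathbf 1_{(\eps,\pi/2]}$ gives the "in particular" statement. So everything reduces to the neighbourhood inequality.

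\emph{Step 2: the neighbourhood inequality via caps.} The set $\{\pm e\}_\eps$ is the union of two antipodal closed caps of angular radius $\eps$. Its complement is the spherical zone $Z_\eps=\{\omega:|\langle\omega,e\rangle|<\cos\eps\}$. The complement of $T_\eps$ is
\[
\{\omega:\langle\omega,x_i\rangle<\cos\eps\ \forall i\}.
\]
I would show $|\mathbb S^{n-1}\setminus T_\eps|\le|Z_\eps|$. For each direction $u$, consider the polytope $P=\mathrm{conv}\,T$. Since $T$ is not in any open hemisphere, the origin lies in $P$. The set $\{\omega:\langle\omega,x_i\rangle<\cos\eps\ \forall i\}$ consists of those $\omega$ with $h_P(\omega)<\cos\eps$, where $h_P$ is the support function of $P$. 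Since $0\in P$, we have $h_P(\omega)+h_P(-\omega)\ge$ width of $P$ in direction $\omega$, and $h_P\ge0$. The key point is that the condition "not in an open hemisphere" ensures every great circle through $\omega$, $-\omega$ meets $T$'s convex hull in a way that forces, in the two-dimensional section containing $e$ direction and $\omega$, a comparison with the segment $[-e,e]$, whose support function is $|\langle\omega,e\rangle|$.

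I expect a cleaner route to be a symmetrization or "polar" argument. The measure $|\{\omega: h_P(\omega)<t\}|$ should be minimized over polytopes $P$ with vertices on the sphere containing $0$ exactly by the segment $[-e,e]$ (for which $h=|\langle\omega,e\rangle|$). I would try a two-point (polarization) symmetrization on the sphere. Reflecting in hyperplanes through $0$ preserves the spherical measure and moves points of $T$ toward $\pm e$. Distance-to-set functions behave monotonically under polarization (the standard Baernstein–Taylor fact: the $\eps$-neighbourhood of the polarized set has measure at most that of the original). The constraint to preserve is "not contained in an open hemisphere", which is where the antipodal pair arises as the limit. This is the main obstacle, since polarization does not obviously preserve this constraint.

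An alternative I would try first is simpler. Pick $x_1\in T$ and set $e=x_1$. Then $T_\eps\supseteq$ cap$(x_1,\eps)$. The hemisphere condition gives, for the opposite region, that every $\omega$ with $\langle \omega,-x_1\rangle\ge\cos\eps$ has some $x_j$ with $\langle x_j,\cdot\rangle$ nonpositive on the hemisphere around $x_1$. A covering or rearrangement of the remaining points $x_2,\dots,x_m$ would then account for mass at least equal to that of the second cap; using concavity of the cap measure in the relevant range handles the overlap. Finally, equality considerations are not needed for the lemma.
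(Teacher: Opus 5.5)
\textbf{Step 1 is fine, but Step 2 (the actual content of the lemma) is not proved.}

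Your Step 1 is sound. The level sets $\{d=\eps\}$ are null, so the layer-cake formula shows that the integral inequality for non-decreasing $h$ is equivalent to $|T_\eps|\ge|\{\pm e\}_\eps|$ for all $\eps\in[0,\pi/2]$. The paper goes the other way: it proves the $h$-version directly and specializes to $h=\mathbf 1_{(\eps,\pi/2]}$. But this reduction makes Step 2 the entire content of the lemma, and Step 2 is not proved.

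\begin{itemize}
\item The identity $\S^{n-1}\setminus T_\eps=\{h_P<\cos\eps\}$ is correct, but it only restates the claim. The sentence about two-dimensional sections being compared with $[-e,e]$ contains no argument.
\item The polarization route fails for the reason you suspect, and not merely ``not obviously''. Polarize the extremal pair $\{e,-e\}$ itself in a generic hyperplane through $0$: $-e$ is replaced by its mirror image on the side of $e$, giving a pair contained in an open hemisphere. So the admissible class is not preserved.
\item The sketch ``pick $x_1$ and let the remaining points account for a second cap'' gives no mechanism for controlling overlaps. The concavity you invoke is also not available: as a function of the radius, the cap measure $|\S^{n-2}|\int_0^\eps\sin^{n-2}t\,dt$ is convex on $[0,\pi/2]$ for $n\ge3$.
\end{itemize}

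The missing idea is to localize to Voronoi cells and compare along each geodesic ray. Let $V_i=\{\omega:\dist_{\S^{n-1}}(\omega,x_i)=d_T(\omega)\}$. This is the intersection of $\S^{n-1}$ with the convex cone $\{y:\langle y,x_i-x_j\rangle\ge0\ \forall j\}$, which contains $x_i$. Hence $V_i$ is geodesically star-shaped about $x_i$. Because $T$ lies in no open hemisphere, $d_T\le\pi/2$. So in polar coordinates $\omega=\cos t\,x_i+\sin t\,\theta$ the cell is $\{t\le\rho_i(\theta)\}$ with $\rho_i\le\pi/2$, and on it $d_T=t$. The map $\rho\mapsto\int_0^{\min\{\eps,\rho\}}\sin^{n-2}t\,dt\big/\int_0^{\rho}\sin^{n-2}t\,dt$ equals $1$ for $\rho\le\eps$ and decreases afterwards. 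Therefore
\[
|T_\eps\cap V_i|=\int_{\S^{n-2}}\int_0^{\min\{\eps,\rho_i(\theta)\}}\sin^{n-2}t\,dt\,d\theta\ \ge\ c_\eps|V_i|,
\qquad c_\eps:=\frac{\int_0^{\eps}\sin^{n-2}t\,dt}{\int_0^{\pi/2}\sin^{n-2}t\,dt}.
\]
Summing over $i$ gives $|T_\eps|\ge c_\eps|\S^{n-1}|=|\{\pm e\}_\eps|$. The antipodal pair is exactly the case where the cells are two hemispheres with $\rho_i\equiv\pi/2$.

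This is the paper's argument. The paper runs the same ray-by-ray comparison directly for a general non-decreasing $h$, using that the weighted average of $h$ over $[0,r]$ is at most its weighted average over $[0,\pi/2]$. Combined with your Step 1, it would close your proof.
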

\begin{proof}
Consider the Voronoi tiling of $\S^{n-1}$ generated by $T$ and let 
$$
V_i = \{\omega\in\mathbb S^{n-1}:\,
\textup{dist}_{\mathbb S^{n-1}}(\omega,x_i)
=d_T(\omega)\}
$$
be the Voronoi cell of $x_i \in T$, $1 \le i\le m $. We note that each $V_i$ is a star-shaped set with respect to $x_i.$ Since $d_T(\omega) \le \pi/2$, then $V_i$ lies in the closed hemisphere of $\S^{n-1}$ centered at $x_i$. 

Choose spherical polar coordinates $(t, \theta)$ in $\S^{n-1}$ centered at $x_i$ such that a point $\omega \in \S^{n-1}$ can be written as
\[
\omega = \cos t\, x_i + \sin t \, \theta,
\]
where $\theta \in \S^{n-2}$ (more precisely, $\theta \in x_i^\perp\cap\mathbb S^{n-1}$) and $t \in [0, {\pi}/2]$ is the angle between $\omega$ and direction $x_i$.  Then the surface area measure on $\S^{n-1}$ is $d\omega = \left(\sin (t)\right)^{n-2} dt \, d\theta$.

Denote the spherical radial function of $V_i$ in the direction $\theta$ by 
$$
\rho_i (\theta) := \max\left\{ t \in [0,{\pi}/2]: \  \cos(t) x_i + \sin (t) \, \theta \in V_i  \right\}.
$$
Note that $\rho_i (\theta) \in [0, \pi/2]$.

Using polar coordinates, we get
\begin{align} \label{Eq::split}
\int_{\mathbb S^{n-1}}h\left(d_T(\omega)\right)\,d\omega &= \sum\limits_{i=1}^m  \int_{V_i} h\left(d_T(\omega)\right)\,d\omega  = \sum\limits_{i=1}^m  \int_{\S^{n-2}} \int\limits_0^{\rho_i(\theta)} h\left(t\right) (\sin t)^{n-2}\, dt d\theta.
\end{align}
Let $\mathcal{F} = \displaystyle\int_0^{\pi/2}
h(t)(\sin t)^{n-2} \,dt \bigg/\displaystyle\int_0^{\pi/2}
(\sin t)^{n-2} \,dt.$ We claim that 
\begin{equation}\label{Eq::ratio}
     \int\limits_0^{r} h\left(t\right) (\sin t)^{n-2}\, dt   \le \mathcal{F} \int\limits_0^{r} (\sin t)^{n-2}\, dt
\end{equation}
for any $0 < r \le {\pi}/2$ (compare with \cite[Lemma 4.4]{DT}). When $r = \pi/2$, the conclusion follows immediately. Assume that $0 < r < {\pi}/2.$ Indeed, since $h$ is non-decreasing, then $h(x) \le h(r) \le h(y)$ for any $x \in [0, r]$ and $y \in [r, \pi/2].$ Thus, the weighted average of $h$ on the interval $[0, r]$ is no greater than the weighted average of $h$ on $[r, \pi/2].$ Moreover, the weighted average $\mathcal{F}$ of $h$ on $[0, \pi/2]$  lies between these two averages which proves \eqref{Eq::ratio}.

Using \eqref{Eq::ratio}, we now can continue the estimate in \eqref{Eq::split}:
\begin{align*} 
& \sum\limits_{i=1}^m  \int_{\S^{n-2}} \int\limits_0^{\rho_i(\theta)} h\left(t\right) (\sin t)^{n-2}\, dt d\theta \le \mathcal{F} \sum\limits_{i=1}^m  \int_{\S^{n-2}} \int\limits_0^{\rho_i(\theta)} (\sin t)^{n-2}\, dt d\theta \\
 &= \mathcal{F}  |\S^{n-1}| =  \int_{\mathbb S^{n-1}}h(d_{\pm e}(\omega))\,d\omega
\end{align*}
which proves the first assertion.

Applying the first inequality with $h(t) = \mathbf 1_{(\eps,\pi/2]}(t)$, we get 
$$
|\{\omega \in \S^{n-1}: \ d_T(\omega) > \eps \}| \le |\{\omega \in \S^{n-1}: \ d_{\pm e}(\omega) > \eps \}|,
$$
that is, $|\mathbb S^{n-1}\setminus T_\varepsilon| \le |\mathbb S^{n-1}\setminus \{\pm e\}_\varepsilon|$. The latter implies that $|T_{\eps}| \ge |\{\pm e\}_\varepsilon|$.

\end{proof}

\section{Proof of the Main Theorem in Euclidean space}
\label{sec:euclidean-proof}

Without loss of generality, we can assume that $\lambda = 1$ and $K$ is a $1$-convex body. 

Let $B = B(o,r)$ be the inscribed ball for $K$ of radius $r := r(K)$ centered at the origin $o \in \R^n$. Note that since $K \subset \R^n$ is strictly convex, this ball is unique. Let $L$ be a $1$-convex lens with $|\partial L|=|\partial K|$. By Theorem~\ref{thm:reverse-inradius}, we have that $r \ge r_0 := r(L)$. If $r=1$, then $K$ is a ball of radius $1$, and the statement of the theorem follows. We therefore can assume that 
\begin{equation}
\label{Eq:InradIneq}
r_0 \le r< 1.
\end{equation}

We first prove the theorem when $K$ is a $1$-convex polytope. Then, the general case will follow by approximation and rigidity in the reverse inradius inequality (Theorem~\ref{thm:reverse-inradius}) as explained in Subsection~\ref{SSec:General}. We consider two cases: 
\begin{enumerate}[label=(\Alph*)]
    \item 
    \label{It:Case1}
    All facets of the polytope $K$ touch its inscribed ball $B$;
    \item
    \label{It:Case2}
    At least one facet of $K$ does not touch $B$.
\end{enumerate}

It turns out that the argument in case \ref{It:Case1} is the most transparent, so we will carry out the proof for this case first. Afterwards, we will indicate how to modify the proof to address case \ref{It:Case2}.

\subsection{Case \ref{It:Case1}: All facets of $K$ touch $B$}
\label{SSec:Touch}

Let $S_1$ be a unit sphere touching $B$. We can parametrize $S_1$ by its radial function, denoted by $\ell_r(\omega)$, $\omega \in \S^{n-1}$ where we emphasize the dependence on the radius $r$ of $B$. Assume that the touching point $S_1 \cap \partial B$ corresponds to $\omega_0 \in \S^{n-1}$. Let $\theta = \dist_{\S^{n-1}} (\omega_0, \omega)$ be the spherical distance between $\omega_0$ and $\omega$ (or, equivalently, the angle between the rays in $\R^n$ in the directions $\omega_0$ and $\omega$). By rotational symmetry, $\ell_r$ can be written explicitly as a function of $\theta$
\[
\ell_r(\theta) = \cos \phi_r(\theta)-(1-r)\cos \theta,
\]
where $\phi_r(\theta)$ is the angle between the ray in $\R^n$ in the direction $\omega$ and the outer normal to $S_1$ at the corresponding point of intersection. We call $\phi_r$ the \emph{radial angle}. This angle was extensively studied earlier in the context of $1$-convex bodies (see, e.g., \cite{
BorDr13, BDr15, BGR, DrPhD, DrBla} for various comparison-type results and their applications).

A direct computation gives
\[
\sin \phi_r(\theta)=(1-r)\sin \theta, \qquad \cos \phi_r(\theta) = \sqrt{1-(1-r)^2\sin^2 \theta}.
\]
Note that $\ell_r$ is strictly increasing on $[0,\pi/2]$. Finally, it is well-known (see, e.g., \cite{BorArea}) that the surface area element of $S_1$ and the volume element of the ball bounded by $S_1$ are given by, respectively,
\begin{equation}
\label{Eq:AreaVol}
A_r(\theta):=\frac{\ell_r(\theta)^{n-1}}{\cos \phi_r(\theta)}, \qquad V_r(\theta):=\frac{\ell_r(\theta)^n}{n}.
\end{equation}

Let $Q \subset \S^{n-1}$ be the set of radial projections of the points where the facets of $K$ touch $B$ onto the unit sphere (recall that both spheres $\partial B$ and $\S^{n-1}$ are centered at the origin). Also, $Q$ does not lie in any open hemisphere.
The radial projections of the facets of $K$ onto $\S^{n-1}$ are the Voronoi cells generated by $Q$ (see, e.g., \cite[Section 3]{DTProc}). Therefore, using notation of Section~\ref{subsec:spherical-lemma}, the radial function $\rho_K$ of $K$ satisfies 
\begin{equation}
    \label{Eq:Bound}
    r\leqslant\rho_K(\omega) = \ell_r(d_Q(\omega)), \qquad  \omega \in \S^{n-1}.
\end{equation}
Then by Lemma~\ref{lem:spherical-comparison},
\begin{equation}
    \label{Eq:Comp0}
    |\{\omega \in \S^{n-1} : d_Q(\omega) > s\}| \le |\{\omega \in \S^{n-1} : d_{\pm e}(\omega) > s\}| \qquad \textup{for } s \in [0,\pi/2].
\end{equation}

It is well-known (see, e.g., \cite{BorArea}) that the volume and the surface area of $K$ can be written in terms of the radial function as follows: 
\begin{equation}
    \label{Eq:AVSupport}
    |\partial K| = \int_{\S^{n-1}} \frac{\rho_K(\omega)^{n-1}}{\cos \phi_K(\omega)}d\omega, \qquad |K| = \int_{\S^{n-1}} \frac{\rho_K(\omega)^n}{n}d\omega,
\end{equation}
where $d\omega$ denotes the standard surface measure on $\S^{n-1}$ and $\phi_K(\omega)$ is the radial angle function of $K$ with respect to the origin. Recall that $\phi_K(\omega)$ is defined as the angle between the ray $w$ from the origin in the direction $\omega \in \S^{n-1}$ and the outer normal to the supporting hyperplane for $K$ at the point $\partial K \cap w = \rho_K(\omega) \cdot \omega$. Note that the set where this is not well-defined has zero surface measure. Using \eqref{Eq:AreaVol} and \eqref{Eq:Bound}, we get
\[
|\partial K| = \int_{\S^{n-1}} A_r(d_Q(\omega)) d\omega, \qquad |K| = \int_{\S^{n-1}}V_r(d_Q(\omega)) d\omega.
\]

We now introduce the main idea of the proof: a \emph {cutoff} of the distance function $d_{\pm e}$ defined for $T\in[0,\pi/2]$ as follows 
\[
d_{\pm e,  T}(\omega):=\min\{d_{\pm e}(\omega),T\}.
\]
Thus, $d_{\pm e,T}$ equals $T$ outside the geodesic balls of radius $T$ in $\S^{n-1}$ centered at $\pm e$, and equals the distance to the nearest of the points $\pm e$ inside these balls. For $s \in [r_0, r]$ and $T \in [0, \pi/2]$, we define the function
\begin{equation}
\label{Eq:sA}
\mathscr{A} \colon (s,T) \longmapsto\int_{\mathbb S^{n-1}}A_s\left(d_{\pm e,T}(\omega)\right)\,d\omega.
\end{equation}
This function is smooth and strictly monotone in both variables on its domain of definition. Moreover, $\mathscr{A}$ is independent of $K$. 
Also,  
\[
\sA(s,0) = |\partial B(o,s)|, \qquad \sA(s,\pi/2) = |\partial L_s|,
\]
where $L_s$ is a 1-convex lens with the inradius $s$. Since $s \le r$ and $B(0,r) \subseteq K$ then the monotonicity of surface area gives $\sA(s,0) \le |\partial K|.$ On the other hand, $\sA(s,\pi/2) = |\partial L_s| \ge |\partial L_{r_0}| = |\partial K|.$ Therefore, $\sA(s,0) \le  |\partial K| \le \sA(s,\pi/2)$ for every $s \in [r_0, r]$. Thus, by the intermediate value theorem and the strict monotonicity of $\sA$, there exists a unique $T(s) \in [0, \pi/2]$  such that
\begin{equation}
    \label{Eq:Area}
    \sA(s,T(s)) = |\partial K|.
\end{equation}

For such $T(s)$, we define
\begin{equation}
\label{Eq:Vol}
\sM(s) := \int_{\mathbb S^{n-1}}V_s\left(d_{\pm e,T(s)}(\omega)\right)\,d\omega.
\end{equation}
(see Figure~\ref{Fig:Truncation}). Note that $T(r_0) = \pi/2$ and 
\begin{equation}
\label{Eq:Final}
\sM(r_0) = |L|.
\end{equation}

\begin{figure}[ht]
\centering
\definecolor{uququq}{rgb}{0.0078431373,0.1686274510,0.2274509804}
\definecolor{ttzzqq}{rgb}{0.1215686275,0.4784313725,0.5490196078}
\begin{tikzpicture}[
  line cap=round,line join=round,
  x=1cm,y=1cm,font=\small,
  every node/.style={text=uququq}
]
  \pgfmathsetmacro{\lensradius}{4.9088623479}
  \pgfmathsetmacro{\lensoffset}{3.0079099572}
  \pgfmathsetmacro{\cutangle}{60} 
  \pgfmathsetmacro{\lensheight}{\lensradius-\lensoffset}
  \pgfmathsetmacro{\rimradius}{sqrt(\lensradius^2-\lensoffset^2)}
  \pgfmathsetmacro{\cutradius}{
    sqrt(\lensradius^2-\lensoffset^2*sin(\cutangle)^2)
    -\lensoffset*cos(\cutangle)}
  \pgfmathsetmacro{\cutx}{\cutradius*sin(\cutangle)}
  \pgfmathsetmacro{\cuty}{\cutradius*cos(\cutangle)}
  \pgfmathsetmacro{\viewdepth}{.4756/\rimradius}
  \pgfmathsetmacro{\viewheight}{sqrt(1-\viewdepth^2)}
  \pgfmathsetmacro{\ringheight}{\cuty*\viewheight}
  \pgfmathsetmacro{\ringdepth}{\viewdepth*\cutx}
  \pgfmathsetmacro{\capx}{sqrt(\lensradius^2-
    ((\lensoffset+\cuty)/\viewheight)^2)}
  \pgfmathsetmacro{\capy}{(\cuty+\lensoffset*\viewdepth^2)/\viewheight}
  \pgfmathsetmacro{\bandx}{sqrt(\cutradius^2-(\cuty/\viewheight)^2)}
  \pgfmathsetmacro{\bandy}{\cuty/\viewheight}
  \pgfmathsetmacro{\capangle}{acos(\capx/\lensradius)}
  \pgfmathsetmacro{\bandangle}{asin(\bandy/\cutradius)}
  \pgfmathsetmacro{\ringcapangle}{acos(\capx/\cutx)}
  \pgfmathsetmacro{\ringbandangle}{acos(\bandx/\cutx)}
  \pgfmathsetmacro{\originalcapx}{sqrt(\lensradius^2-
    (\lensoffset/\viewheight)^2)}
  \pgfmathsetmacro{\originalcapy}{\lensoffset*\viewdepth^2/\viewheight}
  \pgfmathsetmacro{\originalcapangle}{acos(\originalcapx/\lensradius)}
  \pgfmathsetmacro{\originalringangle}{acos(\originalcapx/\rimradius)}
  \pgfmathsetmacro{\rimvisibleangle}{acos(\cutradius/\rimradius)}
  \pgfmathsetmacro{\projectedcutangle}{atan2(\cutx,\ringheight)}
  \draw[ttzzqq!40,line width=.65pt,dash pattern=on 2pt off 3pt]
    (\originalcapx,\originalcapy)
    arc[start angle=\originalcapangle,end angle={180-\originalcapangle},
        radius=\lensradius cm]
    arc[start angle={180-\originalringangle},end angle={180+\originalringangle},
        x radius=\rimradius cm,y radius=.4756cm]
    arc[start angle={180+\originalcapangle},end angle={360-\originalcapangle},
        radius=\lensradius cm]
    arc[start angle={360-\originalringangle},end angle={360+\originalringangle},
        x radius=\rimradius cm,y radius=.4756cm];
  \draw[ttzzqq!30,line width=.55pt,dash pattern=on 2pt off 3pt]
    plot[domain=0:\rimvisibleangle,samples=25,variable=\t]
    ({\rimradius*cos(\t)},{.4756*sin(\t)});
  \draw[ttzzqq!30,line width=.55pt,dash pattern=on 2pt off 3pt]
    plot[domain={180-\rimvisibleangle}:180,samples=25,variable=\t]
    ({\rimradius*cos(\t)},{.4756*sin(\t)});
  \fill[ttzzqq,fill opacity=.12]
    (\capx,\capy)
    arc[start angle=\capangle,end angle={180-\capangle},
        radius=\lensradius cm]
    arc[start angle={180-\ringcapangle},end angle={180-\ringbandangle},
        x radius=\cutx cm,y radius=\ringdepth cm]
    arc[start angle={180-\bandangle},end angle={180+\bandangle},
        radius=\cutradius cm]
    arc[start angle={180+\ringbandangle},end angle={180+\ringcapangle},
        x radius=\cutx cm,y radius=\ringdepth cm]
    arc[start angle={180+\capangle},end angle={360-\capangle},
        radius=\lensradius cm]
    arc[start angle={360-\ringcapangle},end angle={360-\ringbandangle},
        x radius=\cutx cm,y radius=\ringdepth cm]
    arc[start angle={360-\bandangle},end angle={360+\bandangle},
        radius=\cutradius cm]
    arc[start angle=\ringbandangle,end angle=\ringcapangle,
        x radius=\cutx cm,y radius=\ringdepth cm] -- cycle;
  \foreach \height in {\ringheight,-\ringheight}{
    \draw[ttzzqq,line width=.65pt,dash pattern=on 1.5pt off 2.5pt]
      (\cutx,\height)
      arc[start angle=0,end angle=180,
          x radius=\cutx cm,y radius=\ringdepth cm];
    \draw[ttzzqq,line width=.8pt]
      (-\cutx,\height)
      arc[start angle=180,end angle=360,
          x radius=\cutx cm,y radius=\ringdepth cm];
  }
  \draw[ttzzqq,line width=1pt]
    (\bandx,\bandy)
    arc[start angle=\ringbandangle,end angle=\ringcapangle,
        x radius=\cutx cm,y radius=\ringdepth cm]
    arc[start angle=\capangle,end angle={180-\capangle},
        radius=\lensradius cm]
    arc[start angle={180-\ringcapangle},end angle={180-\ringbandangle},
        x radius=\cutx cm,y radius=\ringdepth cm];
  \draw[ttzzqq,line width=1pt]
    (-\bandx,-\bandy)
    arc[start angle={180+\ringbandangle},end angle={180+\ringcapangle},
        x radius=\cutx cm,y radius=\ringdepth cm]
    arc[start angle={180+\capangle},end angle={360-\capangle},
        radius=\lensradius cm]
    arc[start angle={360-\ringcapangle},end angle={360-\ringbandangle},
        x radius=\cutx cm,y radius=\ringdepth cm];
  \draw[uququq,line width=1.05pt]
    (-\bandx,\bandy)
    arc[start angle={180-\bandangle},end angle={180+\bandangle},
        radius=\cutradius cm];
  \draw[uququq,line width=1.05pt]
    (\bandx,-\bandy)
    arc[start angle={360-\bandangle},end angle={360+\bandangle},
        radius=\cutradius cm];

  \draw[ttzzqq!40,line width=.65pt,dash pattern=on 2pt off 3pt]
    (-\rimradius,0)
    arc[start angle=180,end angle=360,
        x radius=\rimradius cm,y radius=.4756cm];

  \draw[uququq!55,line width=.6pt,dash pattern=on 2pt off 2pt]
    (0,0) -- (0,{\lensheight*\viewheight-.12});
  \node[above] at (0,{\lensradius-\lensoffset*\viewheight+.08}) {$e$};
  \draw[uququq,line width=.7pt] (0,0) -- (\cutx,\ringheight);
  \node[below,rotate={90-\projectedcutangle}]
    at ({.62*\cutx},{.62*\ringheight}) {$\ell_s(T)$};
  \draw[uququq,line width=.65pt] (0,.58)
    arc[start angle=90,end angle={90-\projectedcutangle},radius=.58cm];
  \node at ({.8*sin(\projectedcutangle/2)},{.8*cos(\projectedcutangle/2)}) {$T$};
  \node[left] at (0,{.67*\lensheight*\viewheight}) {$s$};
  \fill[uququq] (0,0) circle[radius=1.2pt];
  \node[below left,inner sep=3pt] at (0,0) {$o$};
  \fill[uququq] (\cutx,\ringheight) circle[radius=1.2pt];

  \node at (-.95,-.65) {$\mathscr V(s)$};
\end{tikzpicture}
\caption{The radial cutoff of a lens, with radial function $\ell_s(d_{\pm e,T})$. The original lens is shown dashed. For $T=T(s)$, the volume of the shaded body is $\sM(s)$.}
\label{Fig:Truncation}
\end{figure}

The proof of Theorem~\ref{Thm:KeyA} below relies on the following monotonicity property. 

\begin{lemma}
\label{Lem:Rr}
For every $s \in (0,1)$, the function
\[
\theta \mapsto \Rr_s(\theta) := \frac{V_s'(\theta)}{A_s'(\theta)}
\]
is monotonically increasing on $[0, \pi/2]$. (Here, primes denote differentiation with respect to $\theta$.)
\end{lemma}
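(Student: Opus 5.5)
The plan is to compute $\Rr_s$ explicitly and check its monotonicity directly, after a change of variable that removes the trigonometry. Put $a := 1-s \in (0,1)$, $\ell := \ell_s(\theta)$, and $c := \cos\phi_s(\theta)$. Differentiating $\sin\phi_s = a\sin\theta$ gives $\phi_s' = a\cos\theta / c$. Hence
\[
\ell' = -\sin\phi_s\,\phi_s' + a\sin\theta = a\sin\theta\,\frac{c-a\cos\theta}{c} = \frac{a\sin\theta\,\ell}{c}.
\]
Using this in \eqref{Eq:AreaVol}, I would obtain
\[
V_s' = \frac{a\sin\theta\,\ell^{n}}{c}, \qquad
A_s' = \frac{(n-1)\ell^{n-2}\ell'}{c} + \frac{\ell^{n-1}\sin\phi_s\,\phi_s'}{c^{2}}
= \frac{a\sin\theta\,\ell^{n-1}}{c^{2}}\Bigl((n-1) + \frac{a\cos\theta}{c}\Bigr).
\]
The common factor $a\sin\theta$ cancels, so for $\theta\in(0,\pi/2]$
\[
\Rr_s(\theta) = \frac{\ell\, c^{2}}{(n-1)c + a\cos\theta}.
\]
This expression extends continuously to $\theta = 0$.

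Next, I would change variables to $u := a\cos\theta$, which decreases strictly from $a$ to $0$ as $\theta$ runs over $[0,\pi/2]$. In terms of $u$ we have $c = \sqrt{1-a^2+u^2}$ and $\ell = c-u$, so $dc/du = u/c$. It therefore suffices to show that
\[
R(u) := \frac{(c-u)c^2}{(n-1)c+u}
\]
is strictly decreasing in $u\in[0,a]$. I would compute the logarithmic derivative term by term:
\[
\frac{d}{du}\log(c-u) = -\frac1c, \qquad
\frac{d}{du}\log c^{2} = \frac{2u}{c^{2}}, \qquad
\frac{d}{du}\log\bigl((n-1)c+u\bigr) = \frac{(n-1)u/c+1}{(n-1)c+u}.
\]
Multiplying $(\log R)'$ by the positive quantity $c^{2}\bigl((n-1)c+u\bigr)$ then gives
\[
(2u-c)\bigl((n-1)c+u\bigr) - c\bigl((n-1)u+c\bigr) = (n-2)uc + 2u^{2} - n c^{2}.
\]

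The step needing care is the sign of this last expression. The naive argument fails here: the numerator $\ell c^2$ of $\Rr_s$ is a product of an increasing and a decreasing factor, so monotonicity is not obvious. This is exactly why I pass to the logarithmic derivative. The key point is that $c^{2} = 1-a^{2}+u^{2} > u^{2}$ because $a<1$, i.e.\ $s>0$. Thus $0\le u<c$, and consequently
\[
(n-2)uc + 2u^{2} - nc^{2} < (n-2)c^{2} + 2c^{2} - nc^{2} = 0,
\]
where the inequality is strict since $u<c$, using $n\ge 2$. Hence $R$ is strictly decreasing in $u$, and therefore $\Rr_s$ is strictly increasing in $\theta$ on $[0,\pi/2]$.

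Finally, I would note that the apparent singularity at $\theta=0$, where both $V_s'$ and $A_s'$ vanish, is removed by the cancellation of $a\sin\theta$ above. So the monotonicity holds on the closed interval.
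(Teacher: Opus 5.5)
Your argument is correct and follows the paper's route: both derive the closed form $\Rr_s(\theta)=\ell_s\cos^2\phi_s/(n\cos\phi_s-\ell_s)$ (your denominator $(n-1)\cos\phi_s+(1-s)\cos\theta$ is the same quantity) and then differentiate it, the paper directly in $\theta$ rather than via $u$ and a logarithmic derivative. Your expression also factors exactly, $(n-2)uc+2u^2-nc^2=-(c-u)(nc+2u)=-\ell_s\bigl(n\cos\phi_s+2(1-s)\cos\theta\bigr)$ with your $c=\cos\phi_s$, which recovers the paper's formula for $\Rr_s'$ and makes your sign estimate unnecessary, though that estimate is valid.
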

\begin{proof}
We have that 
$$
\frac{d}{d\theta} \left(\cos \phi_s(\theta)\right) = -\frac{(1-s)^2 \sin \theta \cos \theta}{\cos \phi_s(\theta)}, \qquad \ell_s'(\theta) =\frac{(1-s)\sin\theta}{\cos \phi_s(\theta)}\ell_s(\theta)
$$
and
\begin{equation*}
    \begin{aligned}
A_s'(\theta)&=\frac{(1-s)\sin\theta \cdot \ell_s(\theta)^{n-1}}{\left(\cos \phi_s(\theta)\right)^3} \bigl((n-1)\cos \phi_s(\theta) + (1-s)\cos\theta\bigr)\\
&=\frac{(1-s)\sin\theta \cdot \ell_s(\theta)^{n-1}}{\left(\cos \phi_s(\theta)\right)^3} \bigl(n\cos \phi_s(\theta) - \ell_s(\theta)\bigr)>0.\\
    \end{aligned}
\end{equation*}
Therefore, for $\theta \in [0, \pi/2]$,
\[
\Rr_s(\theta) = \frac{V_s'(\theta)}{A_s'(\theta)}
=\frac{\ell_s(\theta)(\cos \phi_s(\theta))^2}
       {n \cos \phi_s(\theta)-\ell_s(\theta)},
\]
(where the values at $0$ and $\pi/2$ are defined by continuity). Hence, 
$$
\Rr_s'(\theta)
=\frac{(1-s)\ell_s(\theta)^2\sin\theta}
       {\bigl(n \cos \phi_s(\theta)- \ell_s(\theta)\bigr)^2}
 \bigl(n \cos \phi_s(\theta)+2(1-s)\cos\theta\bigr)>0.
$$
\end{proof}

\vspace{-5mm}
The following theorem is a key estimate in the proof.

\begin{theorem}[Key Comparison]
\label{Thm:KeyA}
Let  $K \subset \R^n$ be a 1-convex polytope whose facets all touch its inscribed ball with the inradius $r = r(K)$. If the cutoff $T_0 = T_0(r)$ is chosen so that \eqref{Eq:Area} holds with $s=r$, then
$$
|K|\geqslant \sM(r) =\int_{\mathbb S^{n-1}}V_r(d_{\pm e,T_0}(\omega))\,d\omega.
$$    
\end{theorem}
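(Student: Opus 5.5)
The plan is to reduce the statement to a comparison of distribution functions on $\S^{n-1}$ and then use the monotonicity of $\Rr_r$ from Lemma~\ref{Lem:Rr}. By \eqref{Eq:Bound} and \eqref{Eq:AVSupport}, together with \eqref{Eq:Area} at $s=r$, we have
\[
\int_{\S^{n-1}} A_r(d_Q(\omega))\,d\omega=|\partial K| = \int_{\S^{n-1}} A_r(d_{\pm e,T_0}(\omega))\,d\omega, \qquad |K|=\int_{\S^{n-1}} V_r(d_Q(\omega))\,d\omega .
\]
So it suffices to show $\int V_r(d_Q)\,d\omega \ge \int V_r(d_{\pm e,T_0})\,d\omega$, given that the two area integrals coincide.

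Introduce the distribution functions $F(t)=|\{\omega: d_Q(\omega)>t\}|$ and $G(t)=|\{\omega: d_{\pm e,T_0}(\omega)>t\}|$ for $t\in[0,\pi/2]$. For any $C^1$ function $\Phi$ on $[0,\pi/2]$ and any measurable $d\colon\S^{n-1}\to[0,\pi/2]$, the layer-cake formula (Fubini) gives
\[
\int_{\S^{n-1}}\Phi(d(\omega))\,d\omega=\Phi(0)\,|\S^{n-1}|+\int_0^{\pi/2}\Phi'(t)\,|\{d>t\}|\,dt .
\]
Applying this with $\Phi=A_r$ and $\Phi=V_r$, the area equality becomes $\int_0^{\pi/2}A_r'(t)\,(F(t)-G(t))\,dt=0$. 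The goal becomes $\int_0^{\pi/2}V_r'(t)\,(F(t)-G(t))\,dt\ge0$.

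Next, determine the sign of $F-G$. For $t<T_0$, $G(t)=|\{d_{\pm e}>t\}|$, so Lemma~\ref{lem:spherical-comparison} (in the form \eqref{Eq:Comp0}) gives $F(t)\le G(t)$. For $t\ge T_0$, $G(t)=0$, hence $F(t)-G(t)\ge0$. Thus $F-G$ changes sign exactly once, at $T_0$, from nonpositive to nonnegative.

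Now write $V_r'=\Rr_r A_r'$, where $A_r'>0$ on $(0,\pi/2]$ by the computation in the proof of Lemma~\ref{Lem:Rr}. Then
\[
\int_0^{\pi/2}V_r'(F-G)\,dt=\int_0^{\pi/2}\bigl(\Rr_r(t)-\Rr_r(T_0)\bigr)A_r'(t)\,(F(t)-G(t))\,dt ,
\]
using the vanishing area integral. Since $\Rr_r$ is increasing (Lemma~\ref{Lem:Rr}, with $r\in(0,1)$ by \eqref{Eq:InradIneq}), the integrand is nonnegative on both sides of $T_0$: on $[0,T_0)$ both $\Rr_r(t)-\Rr_r(T_0)$ and $F-G$ are $\le0$, while on $[T_0,\pi/2]$ both are $\ge0$. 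This yields $|K|\ge\sM(r)$.

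The main point requiring care is justifying the layer-cake representation. This needs $A_r, V_r\in C^1[0,\pi/2]$, which holds from the explicit formulas since $\cos\phi_r\ge\sqrt{1-(1-r)^2}>0$. The endpoint cases $T_0\in\{0,\pi/2\}$ need no special treatment, because the sign pattern argument still applies.
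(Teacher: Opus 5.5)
Your proposal is correct and follows essentially the same route as the paper. Both apply the layer-cake identity to $V_r$ and $A_r$, subtract $\Rr_r(T_0)$ times the vanishing area integral, and conclude from the sign pattern of $F_{d_Q}-F_{d_{\pm e,T_0}}$ (via Lemma~\ref{lem:spherical-comparison}) together with the monotonicity of $\Rr_r$ from Lemma~\ref{Lem:Rr}.
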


\begin{proof}
For a measurable function $h:\mathbb S^{n-1}\to[0,\pi/2]$, put $F_h(s):=|\{\omega \in \S^{n-1}:h(\omega)>s\}|$ to be the measure of the superlevel sets of $h$. Then for every $C^1$-smooth function $H$ on $[0,\pi/2]$, the fundamental theorem of calculus gives
$$
H(h(\omega))
=H(0)
 +\int_0^{h(\omega)}H'(s) \,ds.
$$
Integrating the above identity over $\mathbb{S}^{n-1}$, we obtain
$$
\int_{\mathbb S^{n-1}}H(h(\omega))\,d\omega
=H(0)|\mathbb S^{n-1}|
 +\int_0^{\pi/2}H'(s)F_h(s)\,ds
$$
where we used Fubini's theorem in the last integral.

Applying this identity to $H=V_r$ and $h$ equals to $d_Q$ or $d_{\pm e, T_0}$:
\begin{equation}
    \begin{aligned}
    \label{Eq:1}
    |K| - \int_{\S^{n-1}}V_r(d_{\pm e, T_0}(\omega)) d\omega &= \int_{\S^{n-1}}V_r(d_Q(\omega)) d\omega - \int_{\S^{n-1}}V_r(d_{\pm e, T_0}(\omega))d\omega\\
    &=\int_0^{\pi/2}V'_r(s)\left(F_{d_Q}(s)-F_{d_{\pm e, T_0}}(s)\right)ds\\
    &=\int_0^{\pi/2}\Rr_r(s) A'_r(s)\left(F_{d_Q}(s)-F_{d_{\pm e, T_0}}(s)\right)ds
    \end{aligned}
\end{equation}
where we used notation from Lemma~\ref{Lem:Rr}.
Likewise, applying it with $H =A_r$ and using that $\sA(r,T_0(r)) = |\partial K|$, we get
\begin{equation}
    \begin{aligned}
    \label{Eq:2}
    0 = |\partial K| - \int_{\S^{n-1}}A_r(d_{\pm e, T_0}(\omega))d\omega &= \int_{\S^{n-1}}A_r(d_Q(\omega)) d\omega - \int_{\S^{n-1}}A_r(d_{\pm e, T_0}(\omega))d\omega\\
    &=\int_0^{\pi/2}A'_r(s)\left(F_{d_Q}(s)-F_{d_{\pm e, T_0}}(s)\right)ds.    
    \end{aligned}
\end{equation}
Multiplying \eqref{Eq:2} by $\Rr_r(T_0)$ and subtracting it from \eqref{Eq:1} gives
\[
\begin{aligned}
|K|-\int_{\mathbb S^{n-1}}V_r(d_{\pm e, T_0}(\omega))\,d\omega
=\int_0^{\pi/2}A_r'(s)
 \bigl(\Rr_r(s)-\Rr_r(T_0)\bigr)
 \bigl(F_{d_Q}(s)-F_{d_{\pm e, T_0}}(s)\bigr)\,ds.
\end{aligned}
\]

If $s < T_0$, then $F_{d_{\pm e, T_0}}(s)=F_{d_{\pm e}}(s) \ge F_{d_Q}(s)$ by \eqref{Eq:Comp0}. If $s>T_0$, then $F_{d_{\pm e, T_0}}(s)=0 \le F_{d_Q}(s)$. By Lemma~\ref{Lem:Rr}, $\Rr_r$ is increasing, 
so $\Rr_r(s)-\Rr_r(T_0)$ has the same sign as $F_{d_Q}(s)-F_{d_{\pm e, T_0}}(s)$ for each $s < T_0$ and $s>T_0$. Thus, the integrand is always nonnegative on both intervals. This proves the theorem.
\end{proof}

So far, we have that $|K| \ge \sM(r)$ by Theorem~\ref{Thm:KeyA} and $\sM(r_0) = |L|$ by \eqref{Eq:Final}. To compare $\sM(r)$ and $\sM(r_0)$, we now show that $\sM$ is strictly increasing:
\begin{lemma}
\label{Lem:M}
The function $s \mapsto \sM(s)$ is strictly increasing for $s \in [r_0,r]$.
\end{lemma}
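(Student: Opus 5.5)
The plan is to differentiate $\sM$ along the curve $s\mapsto (s,T(s))$ defined implicitly by \eqref{Eq:Area}. Write
\[
\sM(s)=\mathcal V(s,T(s)),\qquad \mathcal V(s,T):=\int_{\S^{n-1}}V_s\bigl(d_{\pm e,T}(\omega)\bigr)\,d\omega .
\]
Since $\sA$ is smooth with $\partial_T\sA>0$, the implicit function theorem gives that $T(s)$ is $C^1$, with $T'(s)=-\partial_s\sA/\partial_T\sA$. Hence
\[
\sM'(s)=\partial_s\mathcal V-\frac{\partial_T\mathcal V}{\partial_T\sA}\,\partial_s\sA ,
\]
and it suffices to show $\partial_s\mathcal V\,\partial_T\sA-\partial_T\mathcal V\,\partial_s\sA>0$ at $(s,T(s))$.

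\textbf{The $T$-derivatives.} Split $\S^{n-1}$ into the two caps $\{d_{\pm e}\le T\}$ and their complement $\{d_{\pm e}>T\}$, of measure $G(T):=F_{d_{\pm e}}(T)$. On the complement the integrands are the constants $V_s(T)$ and $A_s(T)$. Differentiating in $T$, the boundary terms from the moving caps cancel, so
\[
\partial_T\mathcal V=V_s'(T)\,G(T),\qquad \partial_T\sA=A_s'(T)\,G(T).
\]
The ratio is therefore exactly $\Rr_s(T)$ from Lemma~\ref{Lem:Rr}. The target inequality becomes
\begin{equation*}
\int_{\S^{n-1}}\Bigl(\partial_sV_s\bigl(d_{\pm e,T}(\omega)\bigr)-\Rr_s(T)\,\partial_sA_s\bigl(d_{\pm e,T}(\omega)\bigr)\Bigr)\,d\omega>0 .
\end{equation*}

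\textbf{The $s$-derivatives.} These are explicit. We have $\partial_s\cos\phi_s=(1-s)\sin^2\theta/\cos\phi_s$ and
\[
\partial_s\ell_s=\frac{(1-s)\sin^2\theta+\cos\theta\cos\phi_s}{\cos\phi_s}>0 ,
\]
so that $\partial_sV_s=\ell_s^{n-1}\partial_s\ell_s$ and
\[
\partial_sA_s=\frac{(n-1)\ell_s^{n-2}\partial_s\ell_s}{\cos\phi_s}-\frac{\ell_s^{n-1}\partial_s\cos\phi_s}{\cos^2\phi_s}.
\]

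\textbf{Main obstacle.} A naive pointwise comparison fails. At $\theta=0$, using the crude bound $\Rr_s(T)\le\Rr_s(\pi/2)$, the integrand has the wrong sign, so the inequality must be proved in integrated form. I would try two routes, in this order.

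First, use layer-cake as in the proof of Theorem~\ref{Thm:KeyA}: write each integral as a boundary value at $\theta=0$ plus $\int_0^{T}\partial_\theta(\cdot)\,G(\theta)\,d\theta$. The point term at $\theta=0$ equals $|\S^{n-1}|\bigl(s^{n-1}-\Rr_s(T)(n-1)s^{n-2}\bigr)$. The hope is to rewrite the $\theta$-integral using $\partial_\theta\partial_sV_s=\partial_s(\Rr_sA_s')$, isolating a term $\partial_s\Rr_s\cdot A_s'$ plus terms with $\Rr_s(\theta)-\Rr_s(T)\le 0$ multiplied by quantities whose sign is controlled. This would reduce the claim to monotonicity of $\Rr_s$ in $s$, a one-variable check on the closed formula from Lemma~\ref{Lem:Rr}.

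Second, if that fails, use a geometric interpretation. Increasing $s$ at fixed surface area corresponds to deforming the truncated lens of Figure~\ref{Fig:Truncation}. I would look for a first-variation identity $d|\cdot|=\int(\text{normal speed})$ versus $d|\partial\cdot|=\int(\text{mean curvature})\times(\text{normal speed})$, in which the curvature bounds $\ge 1$ on the lens caps and the cylindrical band give the required sign.

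Strictness would come from $G(T)>0$ for $T<\pi/2$, together with the strict monotonicity in Lemma~\ref{Lem:Rr}.
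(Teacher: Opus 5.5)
Your reduction is correct and matches the paper's. The implicit function theorem, the identity $\partial_T\mathcal V/\partial_T\sA=\Rr_s(T)$, and the observation that $\sM'(s)$ has the sign of
\[
\int_{\S^{n-1}}\bigl(\partial_sV_s-\Rr_s(T)\,\partial_sA_s\bigr)\bigl(d_{\pm e,T}(\omega)\bigr)\,d\omega
\]
all appear there, in polar coordinates. But positivity of this integral is the entire content of the lemma, and the proposal does not establish it; it only lists two routes you would try. Route~1, as formulated, does not close. With $f:=\partial_sV_s-\Rr_s(T)\,\partial_sA_s$ you get $f'(\theta)=(\partial_s\Rr_s)(\theta)\,A_s'(\theta)+\bigl(\Rr_s(\theta)-\Rr_s(T)\bigr)\,\partial_\theta\partial_sA_s(\theta)$, and the last factor has no fixed sign. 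Already for $n=2$ one has $\partial_sA_s=\cos\theta/\cos^3\phi_s$, whose $\theta$-derivative has the sign of $(1-s)^2(1+2\cos^2\theta)-1$; this is positive near $\theta=0$ when $(1-s)^2>1/3$, so the product is negative there. Add the negative point term at $\theta=0$ for $T$ near $\pi/2$, and nothing reduces the claim to monotonicity of $\Rr_s$ in $s$. Route~2 is only a heuristic. Also, the middle band of the truncated body is a spherical zone of radius $\ell_s(T)$ about $o$, not a cylinder, and the edges where it meets the caps enter the first variation of area.

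The missing idea is that, after weighting by the polar Jacobian, the $s$-derivatives of the densities are exact $\theta$-derivatives:
\[
(\partial_sV_s)\sin^{n-2}\theta=\frac{1}{n-1}\frac{d}{d\theta}\bigl(\ell_s^{n-1}\sin^{n-1}\theta\bigr),
\qquad
(\partial_sA_s)\sin^{n-2}\theta=\frac{d}{d\theta}\Bigl(\frac{\ell_s^{n-2}\sin^{n-1}\theta}{\cos\phi_s}\Bigr).
\]
Geometrically, increasing $s$ translates the generating unit sphere along $\omega_0$, so the $s$-variation of volume and area over the cone $\{\theta\le T\}$ only sees its boundary $\{\theta=T\}$. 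Hence the cap integrals collapse to values at $\theta=T$; the terms at $\theta=0$ vanish because of the factor $\sin^{n-1}\theta$. The closed form of $\Rr_s(T)$ from Lemma~\ref{Lem:Rr} then gives
\[
\partial_sV_s(T)-\Rr_s(T)\,\partial_sA_s(T)=\frac{(1-s)\ell_s(T)^{n-1}}{n\cos\phi_s(T)-\ell_s(T)}
\]
and
\[
\frac{\ell_s(T)^{n-1}}{n-1}-\Rr_s(T)\frac{\ell_s(T)^{n-2}}{\cos\phi_s(T)}=\frac{(1-s)\ell_s(T)^{n-1}\cos T}{(n-1)\bigl(n\cos\phi_s(T)-\ell_s(T)\bigr)},
\]
so that
\[
\sM'(s)=2|\S^{n-2}|\,\frac{(1-s)\ell_s(T)^{n-1}}{n\cos\phi_s(T)-\ell_s(T)}\Bigl(\int_T^{\pi/2}\sin^{n-2}\theta\,d\theta+\frac{\cos T\sin^{n-1}T}{n-1}\Bigr)>0 .
\]
Strictness comes directly from $0<s<1$ and $0<T<\pi/2$ in this closed form; the monotonicity of $\Rr_s$ in $\theta$ is not needed for this lemma.
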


\begin{proof}
Let $r_0<s<r$ and write $T=T(s) \in (0, \pi/2)$. By rotational symmetry, we can write $\sA, \sM$ as follows:
\begin{equation}
 \label{Eq:SimpleA}
\sA(s,T) = 2|\mathbb S^{n-2}| \left(\int_{0}^T A_s(\theta)\sin^{n-2}\theta \,d\theta + A_s(T)\int_{T}^{{\pi}/{2}} \sin^{n-2}\theta \,d\theta\right), 
\end{equation}
\begin{equation}
 \label{Eq:SimpleV}
\sM(s) = 2|\mathbb S^{n-2}| \left(\int_{0}^{T} V_s(\theta)\sin^{n-2}\theta \,d\theta + V_s(T)\int_{T}^{{\pi}/{2}} \sin^{n-2}\theta \,d\theta\right).
\end{equation}
Differentiating~\eqref{Eq:SimpleA} with respect to $T$ and using that $T \in (0, \pi/2)$, we get
\[
\partial_T\sA(s,T)
=2|\mathbb S^{n-2}| A_s'(T)
\int_T^{\pi/2}\sin^{n-2}\theta\,d\theta>0,
\]
Then the implicit function theorem implies that $T(s)$ is differentiable for $s \in (r_0, r)$. Here, $A_s'(\theta) = \partial A_s(\theta) /\partial \theta $, and $\partial_sA_s(\theta) = \partial A_s(\theta) /\partial s$.

Differentiating \eqref{Eq:Area} with respect to $s$, we obtain
\begin{equation*}
\partial_s\sA(s,T)+\partial_T\sA(s,T) \cdot T'(s)=0  
\end{equation*}
which together with~\eqref{Eq:SimpleA} implies
\begin{equation}\label{Eq:A}
    0 = \int_{0}^T \partial_s A_s(\theta) \sin^{n-2}\theta \, d\theta + \left(\partial_s A_s(T) + A_s'(T) \cdot T'(s) \right)\int_{T}^{\pi/2}  \sin^{n-2}\theta \, d\theta.   
\end{equation}
Similarly, differentiating \eqref{Eq:SimpleV}, we get
\begin{equation}
\label{Eq:B}
    \frac{\sM'(s)}{2|\mathbb S^{n-2}|} = \int_{0}^T \partial_s V_s(\theta) \sin^{n-2}\theta \, d\theta + \left(\partial_s V_s(T) + V_s'(T) \cdot T'(s) \right)\int_{T}^{\pi/2}  \sin^{n-2}\theta \, d\theta.
\end{equation}
To eliminate terms with $T'(s)$, we multiply \eqref{Eq:A} by $\Rr_s(T) = V'_s(T) / A'_s(T)$ and subtract it from \eqref{Eq:B}, so we obtain
\[
\begin{aligned}
\frac{\sM'(s)}{2|\mathbb S^{n-2}|} ={}&\int_0^T \left(\partial_s V_s(\theta)
-\Rr_s(T)\partial_s A_s(\theta)\right) \sin^{n-2}\theta\,d\theta\\
&+\left(\partial_s V_s(T) -\Rr_s(T)\partial_s A_s(T)\right)\int_T^{\pi/2}\sin^{n-2}\theta\,d\theta.
\end{aligned}
\]
First, we simplify the integrals in the last identity. We have:
\[
\partial_s\bigl(\cos\phi_s(\theta)\bigr) =\frac{(1-s)\sin^2\theta}{\cos\phi_s(\theta)},
\qquad \partial_s\ell_s(\theta) =\cos\theta+ \frac{(1-s)\sin^2\theta}{\cos\phi_s(\theta)}.
\]
Together with the formulas for the derivatives with respect to
$\theta$, these give
\[
\begin{aligned}
\bigl(\partial_s A_s(\theta)\bigr)\sin^{n-2}\theta &=\frac{d}{d\theta}
\left(
\frac{\ell_s(\theta)^{n-2}\sin^{n-1}\theta}
     {\cos\phi_s(\theta)}
\right),\\
\bigl(\partial_s V_s(\theta)\bigr)\sin^{n-2}\theta &=\frac{1}{n-1}\frac{d}{d\theta} \left(\ell_s(\theta)^{n-1}\sin^{n-1}\theta\right).
\end{aligned}
\]
Integrating these identities over $[0, T]$, gives
\begin{equation}
\label{Eq:Volll}
\begin{aligned}
\frac{\sM'(s)}{2|\mathbb S^{n-2}|}
={}& \left(\frac{\ell_s(T)^{n-1}}{n-1} -\Rr_s(T)\frac{\ell_s(T)^{n-2}}{\cos\phi_s(T)}\right)\sin^{n-1}T\\
&+\left(\partial_s V_s(T) -\Rr_s(T)\partial_s A_s(T)\right) \int_T^{\pi/2}\sin^{n-2}\theta\,d\theta.
\end{aligned}
\end{equation}
Using the formula for $\Rr_s(T)$ and the identity
$\ell_s(T)=\cos\phi_s(T) - (1-s)\cos T$, we compute
\[
\begin{aligned}
\partial_s V_s(T)-\Rr_s(T)\partial_s A_s(T)
&=
\frac{\ell_s(T)^{n-1}}
     {n\cos\phi_s(T)-\ell_s(T)}
\left(
(1-s)\cos T\,\partial_s\ell_s(T)
+\ell_s(T)\partial_s\bigl(\cos\phi_s(T)\bigr)
\right)\\
&=
\frac{(1-s)\ell_s(T)^{n-1}}
     {n\cos\phi_s(T)-\ell_s(T)}.
\end{aligned}
\]
Indeed, the expression in parentheses equals
\[
(1-s)\cos^2T+
\frac{(1-s)\sin^2T}{\cos\phi_s(T)}
\bigl((1-s)\cos T+\ell_s(T)\bigr)=1-s.
\]
Similarly,
\[
\frac{\ell_s(T)^{n-1}}{n-1}
-\Rr_s(T)\frac{\ell_s(T)^{n-2}}{\cos\phi_s(T)}
=
\frac{(1-s)\ell_s(T)^{n-1}\cos T}
     {(n-1)\bigl(n\cos\phi_s(T)-\ell_s(T)\bigr)}.
\]
Substituting these identities into~\eqref{Eq:Volll} yields
\[
\frac{\sM'(s)}{2|\mathbb S^{n-2}|}=
\frac{(1-s)\ell_s(T)^{n-1}}
     {n\cos\phi_s(T)-\ell_s(T)}
\left(
\int_T^{\pi/2}\sin^{n-2}\theta\,d\theta
+\frac{\cos T\,\sin^{n-1}T}{n-1}
\right)>0.
\]
The strict inequality follows from $0<s<1$ and $0<T<\pi/2$. Continuity at the endpoints completes the proof.
\end{proof}

We can now complete the proof in Case \ref{It:Case1}. Using Theorem~\ref{Thm:KeyA}, Lemma~\ref{Lem:M}, and equation \eqref{Eq:Final}, we get
\begin{equation}
    \label{Eq:FinalClaim}
|K|\geqslant \sM(r)\geqslant \sM(r_0)=|L| \qquad{\text{with equality if and only if} \qquad r=r_0}.
\end{equation}
If $|K| = |L|$, then $r = r_0$ so the equality case in Theorem~\ref{thm:reverse-inradius} gives that $K$ should be a $1$-convex lens. On the other hand, equality holds when $K$ is a $1$-convex lens. This completes the proof of the Main Theorem in Case \ref{It:Case1}.

\subsection{Case \ref{It:Case2}: General case of $1$-convex polytopes}
\label{SSec:NoTouch}

Let $K$ be a $1$-convex polytope such that at least one facet of $K$ does not touch the inscribed ball $B$. We adapt the proof of Subsection~\ref{SSec:Touch} to this case. 

By Carath\'eodory's theorem and maximality of $B$, we can choose a subset of $\partial K \cap \partial B$ of at most $n+1$ touching points whose convex hull contains the origin. Let $Q\subset \mathbb S^{n-1}$ be the radial projections of these points onto the unit sphere. Then $Q$ is not contained in any open hemisphere. The corresponding tangent supporting balls contain $K$. Therefore, if $\rho_K \colon \S^{n-1} \to (0, +\infty)$ is the radial function of $K$, then
$$
r\leqslant\rho_K(\omega) \le \ell_r(d_Q(\omega)), \qquad \omega \in \S^{n-1}.
$$

We can now use the comparison construction of \cite{BorDr13, DrBla}. For every $\omega \in \S^{n-1}$, let $\Theta(\omega) \in \S^1$ be the unique angle such that 
$$
\rho_K(\omega)=\ell_r(\Theta(\omega)).
$$
By the Radial Angle Comparison theorem~\cite{BorDr13, DrBla}, we have
\begin{equation}
   \label{Eq:RAC1}
    \cos \phi_K(\omega) \ge \cos \phi_r(\Theta(\omega)), \qquad \omega \in \S^{n-1}.   
\end{equation}
Observe that $\Theta(\omega)\leqslant d_Q(\omega)$ for all $\omega \in \S^{n-1}$. Therefore, for every $s \in [0, \pi/2]$, Lemma~\ref{lem:spherical-comparison} gives 
\begin{equation}
\label{Eq:Comp1}
|\{\omega \in \S^{n-1} : \Theta(\omega)>s\}| \le |\{\omega \in \S^{n-1} : d_Q(\omega) > s\}| \le |\{\omega \in \S^{n-1} : d_{\pm e}(\omega) > s\}|.
\end{equation}

Furthermore, by \eqref{Eq:AVSupport} and \eqref{Eq:RAC1},
\begin{equation}
\label{Eq:A1}
|\partial K| = \int_{\S^{n-1}} \frac{\rho_K(\omega)^{n-1}}{\cos \phi_K(\omega)} d\omega \le \int_{\S^{n-1}} \frac{\ell_r(\Theta(\omega))^{n-1}}{\cos \phi_r(\Theta(\omega))} d\omega = \int_{\S^{n-1}} A_r(\Theta(\omega)) d\omega.  
\end{equation}
Finally, by \eqref{Eq:AVSupport} and the definition of $\Theta$, we have
\begin{equation*}
\label{Eq:V1}
|K| = \int_{\S^{n-1}} V_r(\Theta(\omega))d\omega.    
\end{equation*}

The proof now continues as in Case \ref{It:Case1} except at one estimate. Namely, we argue as in Theorem~\ref{Thm:KeyA}, replacing the function $ d_Q$ with $\Theta$, and keeping the cutoff $T_0$ that is determined by~\eqref{Eq:Area}. Then the volume identity~\eqref{Eq:1} remains unchanged
\begin{equation}
    \label{Eq:A3}
|K| - \int_{\S^{n-1}} V_r(d_{\pm e, T_0}(\omega)) d \omega = \int_0^{\pi/2} \Rr_r(s) A_r'(s)\big(F_\Theta(s) - F_{d_{\pm e, T_0}}(s)\big)ds.    
\end{equation}
The area identity \eqref{Eq:2} now becomes an inequality due to \eqref{Eq:A1}:
\begin{equation*}
\begin{aligned}
0 = |\partial K| - \int_{\S^{n-1}}A_r(d_{\pm e, T_0}(\omega))d\omega &\le \int_{\S^{n-1}}A_r(\Theta(\omega)) d\omega - \int_{\S^{n-1}}A_r(d_{\pm e, T_0}(\omega))d\omega\\
    &=\int_0^{\pi/2}A'_r(s)\left(F_{\Theta}(s)-F_{d_{\pm e, T_0}}(s)\right)ds.        
\end{aligned}    
\end{equation*}

Multiplying the last inequality by $\Rr_r(T_0)$ and subtracting from \eqref{Eq:A3}, we obtain the following estimate:
\[
|K|-\int_{\S^{n-1}} V_r(d_{\pm e, T_0}(\omega)) d \omega \ge \int_0^{\pi/2}A_r'(s)  \bigl(\Rr_r(s)-\Rr_r(T_0)\bigr) \bigl(F_\Theta(s)-F_{d_{\pm e, T_0}}(s)\bigr)\,ds.
\]
The conclusion is then again similar to that of Theorem~\ref{Thm:KeyA}. If $s<T_0$, then $F_{d_{\pm e, T_0}}(s)=F_{d_{\pm e}}(s) \ge F_{\Theta}(s)$ by \eqref{Eq:Comp1}. If $s>T_0$, then $F_{d_{\pm e, T_0}}(s)=0 \le F_{\Theta}(s)$. Since $\Rr_r$ is increasing by Lemma~\ref{Lem:Rr}, the integrand is always nonnegative on both intervals. 

Thus, $|K|\geqslant \sM(r)$, and $\sM(r) \geqslant \sM(r_0) =|L|$ by Lemma~\eqref{Lem:M}.
As in Case~\ref{It:Case1}, equality forces $r = r_0$, and the equality case in Theorem~\ref{thm:reverse-inradius} implies that $K$ is a $1$-convex lens.

\subsection{General case of $1$-convex bodies}
\label{SSec:General}

Let $K$ be an arbitrary $1$-convex body. By \cite[Proposition 2.5]{DT}, there exists a decreasing sequence $(K_j)$ of $1$-convex polytopes converging to $K$ in the Hausdorff metric and $r(K_j) \to r(K) = r$. For such a sequence, we can construct the decreasing sequence $(L_j)$ of $1$-convex lenses centered at the origin so that $|\partial L_j| = |\partial K_j|$, the sequence converges to $L$ in the Hausdorff metric, and $r(L_j) \to r(L)=r_0$. Since, by the construction above, the inequality holds for $1$-convex polytopes, we have $|K_j| \ge  |L_j|$. Hence, by passing to the limit, we get $|K| \ge |L|$, which proves the inequality.   

If $r = r_0$, then $K$ is a $1$-convex lens by the rigidity part of Theorem~\ref{thm:reverse-inradius}. Assume $r > r_0$. Note that $r(L_j) \to r_0$, and $r(K_j) \to r$. Thus, we can find $s$ such that $s \in (r(L_j), r(K_j))$ for all sufficiently large $j$. Writing \eqref{Eq:FinalClaim} for $K_j$ and $L_j$ and $s$ as above, we get
\[
|K_j| \ge \sM_j(r(K_j)) > \sM_j(s)> \sM_j(r(L_j))=|L_j|,
\]
where $\sM_j$ is the corresponding quantity defined for $L_j$. Passing to the limit as $j \to \infty$, since $s$ is independent of $j$ and $s \in (r_0, r)$, we obtain
\[
|K| \ge \sM(r) > \sM(s) > \sM(r_0)=|L|.
\]
This proves the strict inequality, thus finishing the proof.


\section{Proof of the Main Theorem in spherical and hyperbolic spaces}
\label{sec:model-proof}

We extend the argument of Section~\ref{sec:euclidean-proof} to the model spaces $M^n(c)$ with $c\ne0$. Let $\lambda>0$, and define the generalized trigonometric functions
\[
\sn_c(t):=
\begin{cases}
\dfrac{\sin(\sqrt c\,t)}{\sqrt c},&c>0,\\[2mm]
t,&c=0,\\[1mm]
\dfrac{\sinh(\sqrt{-c}\,t)}{\sqrt{-c}},&c<0,
\end{cases}
\qquad \cn_c(t):=\sn_c'(t).
\]
Recall that $\cn_c'=-c\sn_c$ and $\cn_c^2+c\sn_c^2=1$.

We first prove the result for $\lambda$-convex polytopes. The proof for arbitrary $\lambda$-convex bodies goes by approximation as in Subsection~\ref{SSec:General} of the Euclidean case, and hence is omitted.

Let $K\subset M^n(c)$ be a $\lambda$-convex polytope, and let $B :=B(o,r)$ be an inscribed ball for $K$ with $r = r(K)$. Note that in the case $c < 0$, it might not be unique (see the discussion in \cite{DT}). 

As before, let $L$ be a $\lambda$-convex lens with $|\partial L|=|\partial K|$ and set $r_0:=r(L)$. The reverse inradius inequality (Theorem~\ref{thm:reverse-inradius}) gives $r_0\le r$.

When the $\lambda$-spheres are geodesic spheres, denote their radius by $R_\lambda$, characterized by $\cn_c(R_\lambda)=\lambda\sn_c(R_\lambda)$. If $r=R_\lambda$, then $K=B(o,R_\lambda)$, and the inequality that we want to prove is immediate. We therefore, for the rest of the section, assume $r<R_\lambda$ in this case. 

For equidistants, that is, when $c<0$ and $0<\lambda<\sqrt{-c}$, define the characteristic radius
\[
\rr_\lambda:=\frac1{\sqrt{-c}}\operatorname{arctanh}\frac{\lambda}{\sqrt{-c}},
\]
see the discussion in \cite[Section 4]{DT}. 

\begin{remark}
Consider two equidistant $\lambda$-spheres that are tangent at antipodal points to the geodesic ball of radius $t$ centered at the origin, and whose both associated $\lambda$-balls contain this geodesic ball. Their intersection is compact if $t < \rr_\lambda$ when it is a $\lambda$-convex lens of inradius $t$. Thus, geometrically, $\rr_\lambda$ is the supremum of inradii $t$ of $\lambda$-convex lenses in this case. In particular, there is no $\lambda$-convex lens with such an inradius when $c < 0$, $\lambda \in (0, \sqrt{-c})$ and  $r \ge \rr_\lambda$ so this regime requires separate treatment below.
\end{remark}

Let $\Ss_\lambda$ be a $\lambda$-sphere tangent to $B$ at $p$ and whose $\lambda$-ball contains $B$. Identify the unit tangent sphere at $o$ with $\S^{n-1}$, and denote the direction of $op$ by $\omega_0$. For every direction $\omega$ with $\theta=\dist_{\S^{n-1}}(\omega_0,\omega)\le\pi/2$, let $x$ be the first intersection point of the geodesic ray from $o$ in the direction $\omega$ that meets $\Ss_\lambda$, whenever it exists.
Let $\ell_r(\theta):=\dist_{M^n(c)}(o,x)$ be the radial function of $\Ss_\lambda$, and let $\phi_r(\theta)$ be its radial angle defined as the angle between the outward radial direction and the outer unit normal to $\Ss_\lambda$ at $x$.

For $s\in[r_0,r]$, put
\[
a(s):=\cn_c(s)-\lambda\sn_c(s),
\qquad b(s):=\lambda\cn_c(s)+c\sn_c(s).
\]
Thus $a'=-b$ and $b'=ca$. We have $a(s)>0$ for all $s \in [r_0, r]$. In the case of geodesic spheres (i.e., when $c > 0$, or $c < 0$ and $\lambda > \sqrt{-c}$) or horospheres (i.e., when $c<0$ and $\lambda = \sqrt{-c}$), it follows that $0<a(s)<1$ and $b(s)>0$. For the case of geodesic spheres
\[
a(s)=\frac{\sn_c(R_\lambda-s)}{\sn_c(R_\lambda)}.
\]

In the equidistant case (i.e., when $c<0$ and $\lambda \in (0, \sqrt{-c})$), write $c=-k^2<0$. Since $L$ is compact, $r_0<\rr_\lambda$. For every $s\in[r_0,r]$,
\[
a(s)=\frac{\sqrt{k^2-\lambda^2}}k\cosh(k(s-\rr_\lambda)),
\qquad
b(s)=-\sqrt{k^2-\lambda^2}\sinh(k(s-\rr_\lambda)).
\]
Thus $0<a(s)<1$ and $b(s)>0$ for $s<\rr_\lambda$, while $b(\rr_\lambda)=0$ and $b(s)<0$ for $s>\rr_\lambda$.

A direct computation for $\Ss_\lambda$ (see, e.g., \cite{BorDr13,DrPhD}) gives
\begin{equation}
\label{Eq:ModelAngle}
\sin\phi_s(\theta)=a(s)\sin\theta,
\qquad
\cos\phi_s(\theta)=\sqrt{1-a(s)^2\sin^2\theta}.
\end{equation}
The radial function $\ell_s$ also satisfies the following identities which can be obtained by direct computation
\begin{equation}
\label{Eq:ModelIdentities}
\begin{aligned}
\cn_c(\ell_s(\theta))\cos\phi_s(\theta)
-\lambda\sn_c(\ell_s(\theta))&=a(s)\cos\theta,\\
\lambda\cn_c(\ell_s(\theta))
+c\sn_c(\ell_s(\theta))\cos\phi_s(\theta)&=b(s),\\
a(s)\cn_c(\ell_s(\theta))\cos\theta
+b(s)\sn_c(\ell_s(\theta))&=\cos\phi_s(\theta).
\end{aligned}
\end{equation}
If $b(s)>0$, then $\ell_s(\theta)$ is finite and strictly increasing for $\theta \in [0,\pi/2]$. In the spherical case, the values of $\ell_s(\theta)$  on  $[0,\pi/2]$ are less than $R_\lambda<\pi/(2\sqrt c)$. In the equidistant case with $s\ge\rr_\lambda$, the domain of $\ell_s$ is instead
\[
0\le\theta<\theta_\infty(s):= \arccos\left(\frac{-b(s)}{ka(s)}\right) =\arcsin\frac1{\cosh(k(s-\rr_\lambda))}.
\]
On this interval $\ell_s$ is strictly increasing, and $\ell_s(\theta)\to\infty$ as $\theta\to\theta_\infty(s)$. All the radial identities above are understood on this domain. 

The corresponding area and volume densities~\eqref{Eq:AreaVol} are
\begin{equation}
\label{Eq:ModelAreaVol}
A_s(\theta):=\frac{\sn_c(\ell_s(\theta))^{n-1}}{\cos\phi_s(\theta)},
\qquad
V_s(\theta):=\int_0^{\ell_s(\theta)}\sn_c(t)^{n-1}\,dt.
\end{equation}
When $c=0$ and $\lambda=1$, they recover Euclidean densities~\eqref{Eq:AreaVol}.

Now we return to $K$. By the maximality of $B$ and Carath\'eodory's theorem, choose at most $n+1$ points of $\partial K\cap\partial B$ whose radial directions form a set $Q\subset\S^{n-1}$ not contained in any open hemisphere. By Theorem~\ref{Thm:Bla}, the supporting  $\lambda$-balls at these points contain $K$. Therefore, if $\rho_K$ is the radial function of $K$ with respect to $o$, then 
\[
r\le\rho_K(\omega)\le\ell_r(d_Q(\omega))
\]
for every $\omega$ where $d_Q(\omega)$ is in the domain of $\ell_r$. Here we use notation from Section~\ref{subsec:spherical-lemma}.

As in Section~\ref{SSec:NoTouch}, define $\Theta:\S^{n-1}\to[0,\pi/2]$ by
\begin{equation}
\label{Eq:ModelTheta}
\rho_K(\omega)=\ell_r(\Theta(\omega)).
\end{equation}
Thus $\Theta\le d_Q$. In the equidistant case with $r\ge\rr_\lambda$, the function $\ell_r$ maps $[0,\theta_\infty(r))$ onto $[r,\infty)$, so $\Theta$ is still well-defined. Also, if $d_Q(\omega)\ge\theta_\infty(r)$, then $\Theta(\omega)<\theta_\infty(r)\le d_Q(\omega)$ automatically. Moreover, compactness of $K$ gives
\[
\max_{\omega \in \S^{n-1}}\Theta(\omega)<\theta_\infty(r).
\]
Since $\theta_\infty(s)\ge\theta_\infty(r)$ for $\rr_\lambda\le s\le r$, the same $\Theta$ takes values in the domain of $\ell_s$ for every such $s$.

By the Radial Angle Comparison theorem~\cite{BorArea, BorDr13, BDr15} (see \cite{DrPhD} for an overview), for every $\omega \in \S^{n-1}$ we have
\begin{equation}
\label{Eq:ModelRAC}
\phi_K(\omega)\le\phi_r(\Theta(\omega)),
\qquad\text{or equivalently}\qquad
\cos\phi_K(\omega)\ge\cos\phi_r(\Theta(\omega)).
\end{equation}
Note that in the references above, the result is proven assuming $\partial K$ is smooth. But in model spaces, it can be easily extended to the piecewise smooth case.

Lemma~\ref{lem:spherical-comparison}, together with $\Theta\le d_Q$, now yields for every $t\in[0,\pi/2]$:
\begin{equation}
\label{Eq:ModelSuperlevels}
|\{\omega \in \S^{n-1} : \Theta(\omega)>t\}|\le|\{\omega \in \S^{n-1} : d_Q(\omega)>t\}|\le|\{\omega \in \S^{n-1} : d_{\pm e}(\omega)>t\}|.
\end{equation}
Finally, the radial area and volume formulas give
\begin{equation}
\label{Eq:ModelAVComparison}
\begin{aligned}
|\partial K|&=\int_{\S^{n-1}}
\frac{\sn_c(\rho_K(\omega))^{n-1}}{\cos\phi_K(\omega)}\,d\omega
\le\int_{\S^{n-1}}A_r(\Theta(\omega))\,d\omega,\\
|K|&=\int_{\S^{n-1}}\int_0^{\rho_K(\omega)}\sn_c(t)^{n-1}\,dt\,d\omega
=\int_{\S^{n-1}}V_r(\Theta(\omega))\,d\omega.
\end{aligned}
\end{equation}
Thus we recover the same form of the area and volume expressions as in the general Euclidean case in Section~\ref{SSec:NoTouch}.

As in Section~\ref{SSec:Touch}, the proof of Theorem~\ref{Thm:ModelKeyA} below relies on the following monotonicity property.

\begin{lemma}
\label{Lem:ModelRr}
For every $s \in [r_0,r]$ with $b(s)\ge0$, the function
\[
\theta\longmapsto\Rr_s(\theta):=\frac{V_s'(\theta)}{A_s'(\theta)}
\]
is strictly increasing on $[0,\pi/2]$ if $b(s)>0$, and on $[0,\pi/2)$ if $b(s)=0$, where the value at $0$ is defined by continuity. Moreover, $A_s'(\theta)>0$ for every positive $\theta$ in the domain. (Here and below, primes denote differentiation with respect to $\theta$.)
\end{lemma}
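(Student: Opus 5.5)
We will mimic the proof of Lemma~\ref{Lem:Rr}, replacing Euclidean quantities by their model analogues and using the identities \eqref{Eq:ModelAngle} and \eqref{Eq:ModelIdentities} to keep all expressions in closed form. Write $\ell=\ell_s(\theta)$, $\phi=\phi_s(\theta)$, $a=a(s)$, $b=b(s)$, and abbreviate $S=\sn_c(\ell)$, $C=\cn_c(\ell)$.

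\textbf{Step 1: $\theta$-derivatives of $\phi$ and $\ell$.} From \eqref{Eq:ModelAngle} we get
\[
\frac{d}{d\theta}\cos\phi=-\frac{a^2\sin\theta\cos\theta}{\cos\phi}.
\]
For $\ell'$, we differentiate the second identity in \eqref{Eq:ModelIdentities}, whose right-hand side $b$ is independent of $\theta$. This gives $(-\lambda c S+cC\cos\phi)\ell'=-cS\,(\cos\phi)'$. For $c=0$ one uses the first identity instead. We then simplify with the first identity, i.e.\ $C\cos\phi-\lambda S=a\cos\theta$. We expect the clean formula $\ell'=a\sin\theta\,\tan\phi/(a\sin\theta)\cdot(\ldots)$ to collapse to
\[
\ell'=\frac{a\sin\theta\, S}{\cos\phi},
\]
which is the model analogue of the Euclidean $\ell'=(1-s)\sin\theta\,\ell/\cos\phi$. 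This also follows geometrically: $\tan\phi=S\,d\theta/d\ell$ in polar coordinates. With \eqref{Eq:ModelAngle} we get $\ell'=S\sin\theta\,a/\cos\phi$ directly, so we will use that route.

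\textbf{Step 2: compute $A_s'$ and $V_s'$.} Since $V_s'=S^{n-1}\ell'=aS^n\sin\theta/\cos\phi$, we have
\[
A_s'=\frac{a\sin\theta\,S^{n-1}}{\cos^3\phi}\bigl((n-1)C\cos\phi+aS\cos\theta\bigr).
\]
Using the third identity, $aS\cos\theta$ can be traded for combinations of $\cos\phi$ and $b$. The bracket $(n-1)C\cos\phi+aS\cos\theta$ is positive whenever $C>0$ and $\theta$ is in the domain. This is clear in the hyperbolic and horospherical cases. In the spherical case it holds because $\ell<R_\lambda<\pi/(2\sqrt c)$. This gives $A_s'>0$ for $\theta>0$. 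Hence
\[
\Rr_s(\theta)=\frac{S\cos^2\phi}{(n-1)C\cos\phi+aS\cos\theta},
\]
extended by continuity at $\theta=0$.

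\textbf{Step 3: monotonicity.} Differentiate $\log\Rr_s$ in $\theta$, using $S'=C\ell'$, $C'=-cS\ell'$, the formula for $(\cos\phi)'$, and $\ell'$ from Step 1. After clearing denominators, the goal is to show that $\Rr_s'$ equals a positive factor times an expression like $nC\cos\phi+2a\cos\theta+(\text{terms involving }b)$. The model identities \eqref{Eq:ModelIdentities} should be used repeatedly to eliminate $\lambda$ and to express leftovers through $b\ge0$. The expectation is that the final bracket is a sum of nonnegative terms, strictly positive for $\theta\in(0,\pi/2]$ when $b>0$. When $b=0$ (the equidistant case with $s=\rr_\lambda$), $\ell_s(\theta)\to\infty$ as $\theta\to\pi/2$, so strictness is claimed only on $[0,\pi/2)$.

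\textbf{Main obstacle.} The real work is the algebra in Step 3: producing a manifestly nonnegative numerator valid uniformly in the sign of $c$. I would first redo the computation in the normalized variables $x=C\cos\phi$, $y=S$, reduce everything to polynomial expressions via \eqref{Eq:ModelIdentities}, and then check the sign by grouping terms with $b$, $a\cos\theta$, and $\cos\phi$ separately. As a consistency check, setting $c=0$, $\lambda=1$ should reproduce the Euclidean formula $\Rr_s'\propto n\cos\phi+2(1-s)\cos\theta$ from Lemma~\ref{Lem:Rr}.
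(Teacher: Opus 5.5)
Your overall strategy matches the paper's: compute $\ell_s'$, $V_s'$, $A_s'$ explicitly, then differentiate the quotient. However, the proposal has a computational error, and the decisive step is left undone. In Step 2, the factor $(\cos\phi)'=-a^2\sin\theta\cos\theta/\cos\phi$ carries no factor of $S$, so one gets
\[
A_s'=\frac{aS^{n-1}\sin\theta}{\cos^3\phi}\bigl((n-1)C\cos\phi+a\cos\theta\bigr),
\]
with $a\cos\theta$, not $aS\cos\theta$, in the bracket. Your own proposed consistency check detects this: for $c=0$, $\lambda=1$ your bracket becomes $(n-1)\cos\phi+(1-s)\ell\cos\theta$ rather than the Euclidean $(n-1)\cos\phi+(1-s)\cos\theta$. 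Positivity of $A_s'$ survives, but your formula for $\Rr_s$ is wrong. Using the first identity in \eqref{Eq:ModelIdentities}, the correct one is $\Rr_s=S\cos^2\phi/(nC\cos\phi-\lambda S)$. Separately, in Step 1 the polar relation is inverted: it is $\tan\phi=\ell'/S$, and this is what gives your otherwise correct $\ell'=aS\sin\theta/\cos\phi$.

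More importantly, Step 3 is the entire content of the lemma, and you only state an expectation without showing how $b\ge0$ enters. The missing idea is to differentiate the reciprocal $1/\Rr_s=nC/(S\cos\phi)-\lambda/\cos^2\phi$. Then use $(C/S)'=-\ell'/S^2$, which follows from $C^2+cS^2=1$. Finally, apply the \emph{third} identity in \eqref{Eq:ModelIdentities} in the form $\cos\phi-aC\cos\theta=bS$. This collapses the computation to
\[
\Rr_s'=\frac{aS^2\sin\theta}{(nC\cos\phi-\lambda S)^2}\bigl(nb\cos\phi+2\lambda a\cos\theta\bigr).
\]
This is positive on $(0,\pi/2)$ since $a,\lambda>0$ and $b\ge0$; strictness comes from the term $2\lambda a\cos\theta$, even when $b=0$. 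Expanding $b=\lambda C+cS\cos\phi$ gives $nb\cos\phi+2\lambda a\cos\theta=\lambda(nC\cos\phi+2a\cos\theta)+ncS\cos^2\phi$, whose last term is negative for $c<0$. So your guessed shape is close, but positivity becomes manifest only after regrouping into $nb\cos\phi$, and that is exactly where the hypothesis $b(s)\ge0$ is used. Without this step, and starting from the incorrect $\Rr_s$, your Step 3 does not close.
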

\begin{proof}
Throughout this computation, $\ell_s$ and $\phi_s$ are evaluated at $\theta$, and we suppress this argument for brevity. From~\eqref{Eq:ModelAngle} and \eqref{Eq:ModelIdentities}, we have
\[
\ell_s'=\frac{a(s)\sn_c(\ell_s)\sin\theta}{\cos\phi_s},
\qquad
(\cos\phi_s)'=-\frac{a(s)^2\sin\theta\cos\theta}{\cos\phi_s}.
\]
Differentiating the densities~\eqref{Eq:ModelAreaVol} now gives
\[
\begin{aligned}
V_s'&=\frac{a(s)\sn_c(\ell_s)^n\sin\theta}{\cos\phi_s},\\
A_s'&=\frac{a(s)\sn_c(\ell_s)^{n-1}\sin\theta}{\cos^3\phi_s}
\bigl((n-1)\cn_c(\ell_s)\cos\phi_s+a(s)\cos\theta\bigr).
\end{aligned}
\]
By the first identity in~\eqref{Eq:ModelIdentities}, the last factor is positive
\[
n\cn_c(\ell_s)\cos\phi_s-\lambda\sn_c(\ell_s)
=(n-1)\cn_c(\ell_s)\cos\phi_s+a(s)\cos\theta>0
\]
where we used that $\cn_c(\ell_s) > 0$.
Consequently, $A_s'>0$ for $\theta  > 0$ and
\begin{equation}
\label{Eq:ModelRr}
\Rr_s(\theta)=
\frac{\sn_c(\ell_s)\cos^2\phi_s}
{n\cn_c(\ell_s)\cos\phi_s-\lambda\sn_c(\ell_s)}.
\end{equation}
To differentiate this quotient, it is convenient to consider its reciprocal:
\[
\frac1{\Rr_s}=
\frac{n\cn_c(\ell_s)}{\sn_c(\ell_s)\cos\phi_s}
-\frac{\lambda}{\cos^2\phi_s}.
\]
Since $\cn_c^2+c\sn_c^2=1$, we have
\[
\left(\frac{\cn_c(\ell_s)}{\sn_c(\ell_s)}\right)'
=-\frac{\ell_s'}{\sn_c(\ell_s)^2}
=-\frac{a(s)\sin\theta}{\sn_c(\ell_s)\cos\phi_s}.
\]
Therefore,
\[
\begin{aligned}
\left(\frac1{\Rr_s}\right)'
={}&-\frac{na(s)\sin\theta}{\sn_c(\ell_s)\cos^2\phi_s}
+\frac{na(s)^2\cn_c(\ell_s)\sin\theta\cos\theta}
{\sn_c(\ell_s)\cos^3\phi_s}-\frac{2\lambda a(s)^2\sin\theta\cos\theta}{\cos^4\phi_s}\\
={}&-\frac{na(s)\sin\theta}{\sn_c(\ell_s)\cos^3\phi_s}
\bigl(\cos\phi_s-a(s)\cn_c(\ell_s)\cos\theta\bigr)-\frac{2\lambda a(s)^2\sin\theta\cos\theta}{\cos^4\phi_s}\\
={}&-\frac{a(s)\sin\theta}{\cos^4\phi_s}
\bigl(nb(s)\cos\phi_s+2\lambda a(s)\cos\theta\bigr),
\end{aligned}
\]
where in the last equality we used the third identity in~\eqref{Eq:ModelIdentities}. Since $\Rr_s'=-\Rr_s^2(1/\Rr_s)'$, we obtain
\[
\begin{aligned}
\Rr_s'(\theta)
={}&\frac{a(s)\sn_c(\ell_s)^2\sin\theta}
{\bigl(n\cn_c(\ell_s)\cos\phi_s-\lambda\sn_c(\ell_s)\bigr)^2} \cdot\bigl(nb(s)\cos\phi_s+2\lambda a(s)\cos\theta\bigr)>0
\end{aligned}
\]
for $0<\theta<\pi/2$, because $a(s)>0$ and $b(s)\ge0$.
\end{proof}

As in the Euclidean proof in Section~\ref{SSec:Touch}, we use the cutoff $d_{\pm e,T}:=\min\{d_{\pm e},T\}$ and set
\begin{equation}
\label{Eq:ModelCutoffArea}
\sA(s,T):=\int_{\S^{n-1}}A_s(d_{\pm e,T}(\omega))\,d\omega.
\end{equation}
Here, $s$ is taken in $[r_0,r]$ in the case of geodesic spheres and horospheres, and $s\in[r_0,\min\{r,\rr_\lambda\}]$ in the equidistant case.

For fixed $s$ with $b(s)>0$, Lemma~\ref{Lem:ModelRr} implies that $\sA(s, \cdot)$ is a continuous, strictly increasing function on $T\in[0,\pi/2]$, with
\[
\sA(s,0)=|\partial B(o,s)|,
\qquad \sA(s,\pi/2)=|\partial L_s|,
\]
where $L_s$ is a $\lambda$-convex lens of inradius $s$. The lenses $L_s$ are nested as $s$ increases, and their surface areas are strictly increasing. 

Since $B(o,r)\subset K$ and $|\partial L_{r_0}|=|\partial K|$, for every such $s$ we have
\[
|\partial B(o,s)|\le|\partial B(o,r)|\le|\partial K|
=|\partial L_{r_0}|\le|\partial L_s|.
\]
If $r\ge\rr_\lambda$ in the equidistant case, we also define the cutoff at $s=\rr_\lambda$. Since $b(\rr_\lambda)=0$, identities~\eqref{Eq:ModelIdentities} give
\[
\sn_c(\ell_{\rr_\lambda}(\theta))
=\frac{\sn_c(\rr_\lambda)}{\cos\theta},
\qquad 0\le\theta<\pi/2.
\]
Moreover, $\cos\phi_{\rr_\lambda}(\theta)\to\lambda/k>0$ as $\theta\to\pi/2$. Thus
\[
\sA(\rr_\lambda,0)=|\partial B(o,\rr_\lambda)|\le|\partial K|,
\qquad
\lim_{T\to\pi/2}\sA(\rr_\lambda,T)=\infty.
\]
The function $T\mapsto\sA(\rr_\lambda,T)$ is continuous and strictly increasing on $[0,\pi/2)$.

Hence, by continuity, strict monotonicity and the intermediate value theorem, there exists a unique cutoff $T=T(s)\in[0,\pi/2]$ for every admissible $s$ (with $T(\rr_\lambda)<\pi/2$ when this endpoint is included) such that
\begin{equation}
\label{Eq:ModelAreaConstraint}
\sA(s,T(s))=|\partial K|.
\end{equation}
For this choice, set
\begin{equation}
\label{Eq:ModelCutoffVolume}
\sM(s):=\int_{\S^{n-1}}V_s(d_{\pm e,T(s)}(\omega))\,d\omega.
\end{equation}
As before, $T(r_0)=\pi/2$ and $\sM(r_0)=|L|$.

\begin{theorem}[Key Comparison]
\label{Thm:ModelKeyA}
In the notation above, 
\[
|K|\ge
\begin{cases}
\sM(\rr_\lambda),&\text{if }c<0,\ 0<\lambda<\sqrt{-c},\text{ and }r\ge\rr_\lambda,\\
\sM(r),&\text{otherwise}.
\end{cases}
\]
\end{theorem}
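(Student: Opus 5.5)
The plan is to repeat the layer-cake argument of Theorem~\ref{Thm:KeyA}, in the form used in Subsection~\ref{SSec:NoTouch}, at the base radius $s_*:=r$. In the exceptional equidistant regime $r\ge\rr_\lambda$ we use instead $s_*:=\rr_\lambda$, the largest radius at which Lemma~\ref{Lem:ModelRr} applies, since $b(s)\ge 0$ there. In the non-exceptional case the steps are as follows.

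\emph{Layer-cake identities.} For a measurable $h$ with values in the domain of $A_r$ set $F_h(t):=|\{\omega\in\S^{n-1}: h(\omega)>t\}|$. Fubini gives
\[
\int_{\S^{n-1}}H(h)\,d\omega=H(0)|\S^{n-1}|+\int H'(t)F_h(t)\,dt .
\]
We apply this with $H=V_r$ and with $H=A_r$, taking $h=\Theta$ and $h=d_{\pm e,T_0}$, where $T_0=T(r)$ is given by \eqref{Eq:ModelAreaConstraint}.

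\emph{Area and volume relations.} By \eqref{Eq:ModelAVComparison}, the volume identity is exact and the area relation is an inequality:
\[
0=|\partial K|-\sA(r,T_0)\le\int A_r'(t)\bigl(F_\Theta-F_{d_{\pm e,T_0}}\bigr)dt .
\]

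\emph{Elimination step.} We write $V_r'=\Rr_r A_r'$, multiply the area inequality by $\Rr_r(T_0)$ and subtract it from the volume identity. This gives
\[
|K|-\sM(r)\ge\int A_r'(t)\bigl(\Rr_r(t)-\Rr_r(T_0)\bigr)\bigl(F_\Theta(t)-F_{d_{\pm e,T_0}}(t)\bigr)dt .
\]
The integrand is nonnegative. For $t<T_0$ we have $F_{d_{\pm e,T_0}}=F_{d_{\pm e}}\ge F_\Theta$ by \eqref{Eq:ModelSuperlevels}. For $t>T_0$ we have $F_{d_{\pm e,T_0}}=0$. Also $\Rr_r$ is increasing and $A_r'>0$ by Lemma~\ref{Lem:ModelRr}, which applies because $b(r)>0$ for geodesic spheres, horospheres, and equidistants with $r<\rr_\lambda$. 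The endpoint $T_0=\pi/2$ is harmless, since then $F_{d_{\pm e,T_0}}=F_{d_{\pm e}}$ throughout.

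\emph{The exceptional regime.} In the equidistant case with $r\ge\rr_\lambda$, Lemma~\ref{Lem:ModelRr} fails for $s=r$ because $b(r)<0$, so we pass to $s_*=\rr_\lambda$. First define $\Theta_*$ by $\rho_K=\ell_{\rr_\lambda}(\Theta_*)$. This is well defined because $\ell_{\rr_\lambda}$ maps $[0,\pi/2)$ onto $[\rr_\lambda,\infty)$ and $\rho_K\ge r\ge\rr_\lambda$. This makes the volume formula exact: $|K|=\int V_{\rr_\lambda}(\Theta_*)\,d\omega$. For the area, we need $\cos\phi_K\ge\cos\phi_{\rr_\lambda}(\Theta_*)$. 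We will get it from \eqref{Eq:ModelRAC} together with the claim that, at a fixed distance $\rho$ from $o$, the radial angle of the supporting $\lambda$-sphere tangent to $B(o,s)$ is nondecreasing as $s$ decreases. We plan to verify this from \eqref{Eq:ModelAngle}--\eqref{Eq:ModelIdentities} by eliminating $\theta$ and differentiating in $s$.

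\emph{Main obstacle.} The superlevel comparison $F_{\Theta_*}\le F_{d_{\pm e}}$ on $[0,T(\rr_\lambda))$ does not follow from $\Theta\le d_Q$. Indeed $\ell_{\rr_\lambda}\le\ell_r$ pointwise, so $\Theta_*\ge\Theta$, which is the wrong direction. Our first attempt is to use the inscribed ball $B(o,\rr_\lambda)\subset B(o,r)\subset K$ directly: replace the touching configuration $Q$ by one adapted to the smaller ball. For this we either move the center $o$, or argue that the set $\{\Theta_*>t\}$ is contained in a union of caps of radius $\ge t$ around $Q$, by comparing the $\lambda$-spheres tangent to $B(o,\rr_\lambda)$ in the directions of $Q$ with those tangent to $B(o,r)$. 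The same monotonicity in $s$ as above is the key input here.

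If this fails, the fallback is to avoid $\Theta_*$ entirely. We would keep $\Theta$, use $V_r$ and $A_r$ for the exact integrals, and run the elimination with weight $\Rr_{\rr_\lambda}$ in place of $\Rr_r$. This requires showing that the resulting error term has a sign, using $b(r)\le 0$ together with the explicit formula for $\Rr_s$ in \eqref{Eq:ModelRr}. With the superlevel comparison and the radial-angle monotonicity established, the sign argument is identical to the non-exceptional case and yields $|K|\ge\sM(\rr_\lambda)$.
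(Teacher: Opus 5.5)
Your treatment of the case $b(r)>0$ is correct and matches the paper. The gap is in the equidistant regime with $r>\rr_\lambda$.

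\textbf{Your main route via $\Theta_*$ fails, and not for technical reasons.} Since $\rho_K\ge r>\rr_\lambda=\ell_{\rr_\lambda}(0)$, you have $\Theta_*\ge t_1:=\ell_{\rr_\lambda}^{-1}(r)>0$ on all of $\S^{n-1}$. Hence $F_{\Theta_*}(t)=|\S^{n-1}|>F_{d_{\pm e}}(t)$ for $0<t<t_1$. Also $T(\rr_\lambda)>0$, because $|\partial B(o,\rr_\lambda)|<|\partial K|$. So on $(0,\min\{t_1,T(\rr_\lambda)\})$ your integrand is strictly negative, and the pointwise sign argument breaks.

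With centre $o$, no cap-containment argument can fix this. Re-centering would need a ball of radius $\rr_\lambda$ in $K$ whose touching directions lie in no open hemisphere. You do not produce one, and it need not exist: if $K$ is a geodesic ball of radius larger than $\rr_\lambda$, a smaller ball inside it touches $\partial K$ at most once.

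The radial-angle monotonicity you plan to prove is true but is not the missing piece. At fixed $\ell$, the second identity in \eqref{Eq:ModelIdentities} gives $k^2\sn_c(\ell)\cos\phi_s=\lambda\cn_c(\ell)-b(s)$, and $b$ decreases in $s$.

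\textbf{Your fallback is the paper's route, but it omits its only genuinely new step.} Keep $\Theta$ (defined through $\ell_r$) and put $\mu:=\Rr_{\rr_\lambda}(T(\rr_\lambda))$. What must be shown is the pointwise inequality $(V_r-\mu A_r)(\theta)\ge(V_{\rr_\lambda}-\mu A_{\rr_\lambda})(\theta)$ for every $\theta$ in the range of $\Theta$. The paper obtains it from
\[
\partial_s\Bigl(V_s-\tfrac{\lambda}{(n-1)k^2}A_s\Bigr)=-\frac{b(s)\sn_c(\ell_s)^{n-2}}{k^2\cos^3\phi_s}\Bigl(\cos\theta+\tfrac{n-2}{n-1}\lambda a(s)\sn_c(\ell_s)\sin^2\theta\Bigr)\ge0,
\qquad s\in[\rr_\lambda,r].
\]
Two further facts are combined with this:
\begin{itemize}
\item $\partial_sA_s>0$ on $[\rr_\lambda,r]$;
\item $0<\mu<\lambda/((n-1)k^2)$, which follows from $\Rr_{\rr_\lambda}=\lambda\cos^2\phi_{\rr_\lambda}/(nk^2\cos^2\phi_{\rr_\lambda}-\lambda^2)$ and $\cos^2\phi_{\rr_\lambda}>\lambda^2/k^2$.
\end{itemize}
Then
\[
|K|-\mu|\partial K|\ge\int_{\S^{n-1}}(V_r-\mu A_r)(\Theta)\,d\omega\ge\int_{\S^{n-1}}(V_{\rr_\lambda}-\mu A_{\rr_\lambda})(\Theta)\,d\omega\ge\sM(\rr_\lambda)-\mu|\partial K|.
\]
The last step is the layer-cake argument at $s=\rr_\lambda$ applied to $\Theta$ itself. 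There $F_\Theta\le F_{d_{\pm e}}$ does follow from $\Theta\le d_Q$.

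Without this monotonicity in $s$, the elimination you describe cannot be run at $s=r$, for two reasons:
\begin{itemize}
\item $\Rr_r$ need not be increasing when $b(r)<0$, since its $\theta$-derivative has the sign of $nb(r)\cos\phi_r+2\lambda a(r)\cos\theta$;
\item $V_r(d_{\pm e,T})$ is infinite or undefined once $T(\rr_\lambda)\ge\theta_\infty(r)$.
\end{itemize}
Your closing sentence also invokes the superlevel comparison for $\Theta_*$ and the radial-angle monotonicity. In the route that works, the first is false and not needed, and the second plays no role.
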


\begin{proof}
If $b(r)>0$, Lemma~\ref{Lem:ModelRr}, \eqref{Eq:ModelSuperlevels} and \eqref{Eq:ModelAVComparison} allow us to repeat the argument of Section~\ref{SSec:NoTouch}, and to obtain $|K| \ge \sM(r)$. It remains to consider the equidistant case $r\ge\rr_\lambda$. Write $T=T(\rr_\lambda)$. We first show that $V_s-\Rr_{\rr_\lambda}(T)A_s$ is increasing in $s$. This reduces the remaining estimate to a comparison at $s=\rr_\lambda$ where we apply the argument analogous to that in the Euclidean case.

First, $b(\rr_\lambda)=0$ and~\eqref{Eq:ModelRr} give
\[
\Rr_{\rr_\lambda}(\theta)
=\frac{\lambda\cos^2\phi_{\rr_\lambda}(\theta)}
{nk^2\cos^2\phi_{\rr_\lambda}(\theta)-\lambda^2}.
\]
Since $\cos^2\phi_{\rr_\lambda}(\theta)>\lambda^2/k^2$ for $\theta<\pi/2$, we obtain
\begin{equation}
\label{Eq:ModelThresholdSlope}
0<\Rr_{\rr_\lambda}(T)<\frac{\lambda}{(n-1)k^2}.
\end{equation}
If $T=0$, the quotient is understood by continuity.

We next compare the densities at $s=r$ and $s=\rr_\lambda$, keeping $\Theta$ fixed. First, differentiation of~\eqref{Eq:ModelAngle} with respect to $s$, using $a'=-b$, gives
\[
\partial_s(\cos\phi_s)
=\frac{a(s)b(s)\sin^2\theta}{\cos\phi_s}.
\]
Differentiating the first identity in~\eqref{Eq:ModelIdentities} with respect to $s$ gives
\[
-b(s)\partial_s\ell_s
+\cn_c(\ell_s)\partial_s(\cos\phi_s)=-b(s)\cos\theta,
\]
where we used the second identity to collect the coefficient of $\partial_s\ell_s$. Since $b(s)>0$, substitution yields
\begin{equation}
\label{Eq:ModelParameterDerivatives}
\partial_s\ell_s=\cos\theta+
\frac{a(s)\cn_c(\ell_s)\sin^2\theta}{\cos\phi_s},
\qquad
\partial_s(\cos\phi_s)=
\frac{a(s)b(s)\sin^2\theta}{\cos\phi_s}.
\end{equation}

The formulas of~\eqref{Eq:ModelParameterDerivatives} apply whenever $b(s)\ne0$, and extend to $b(s)=0$ by continuity at each fixed admissible angle. For $s\ge\rr_\lambda$, they give
\[
\partial_s\ell_s>0, \qquad \partial_s(\cos\phi_s)\le0, \qquad \partial_s A_s>0.
\]
Here and in the following computation, the argument $\theta$ is suppressed. Using
$b(s)=\lambda\cn_c(\ell_s)-k^2\sn_c(\ell_s)\cos\phi_s$, we find
\[
\begin{aligned}
\partial_s\left(V_s-\frac{\lambda}{(n-1)k^2}A_s\right)
&=-\frac{b(s)\sn_c(\ell_s)^{n-2}}{k^2\cos\phi_s}
\left(\partial_s\ell_s-
\frac{\lambda a(s)\sn_c(\ell_s)\sin^2\theta}{(n-1)\cos^2\phi_s}\right)\\
&=-\frac{b(s)\sn_c(\ell_s)^{n-2}}{k^2\cos^3\phi_s}
\left(\cos\theta+
\frac{n-2}{n-1}\lambda a(s)\sn_c(\ell_s)\sin^2\theta\right)\ge0.
\end{aligned}
\]
For the second equality, substitute $\partial_s\ell_s$ from~\eqref{Eq:ModelParameterDerivatives} and use
\[
\cn_c(\ell_s)\cos\phi_s-\lambda\sn_c(\ell_s)=a(s)\cos\theta,
\qquad
\cos^2\phi_s+a(s)^2\sin^2\theta=1.
\]
Together with~\eqref{Eq:ModelThresholdSlope} and $\partial_sA_s>0$, this proves
\begin{equation}
\label{Eq:ModelThresholdVariation}
\partial_s\bigl(V_s-\Rr_{\rr_\lambda}(T)A_s\bigr)>0
\end{equation}
for every $s\in[\rr_\lambda,r]$ and every angle in the range of $\Theta$.

It remains to make the comparison at $s=\rr_\lambda$. As in the Euclidean proof, let $F_h(t):=|\{\omega \in \S^{n-1} : h(\omega)>t\}|$. Fubini's theorem and the area constraint~\eqref{Eq:ModelAreaConstraint} yield
\[
\begin{aligned}
&\int_{\S^{n-1}}\bigl(V_{\rr_\lambda}(\Theta)
-\Rr_{\rr_\lambda}(T)A_{\rr_\lambda}(\Theta)\bigr)\,d\omega
-\sM(\rr_\lambda)+\Rr_{\rr_\lambda}(T)|\partial K|\\
&\quad=\int_0^{\pi/2} A_{\rr_\lambda}'(t)
\bigl(\Rr_{\rr_\lambda}(t)-\Rr_{\rr_\lambda}(T)\bigr)
\bigl(F_\Theta(t)-F_{d_{\pm e,T}}(t)\bigr)\,dt\ge0.
\end{aligned}
\]
Indeed, for $t<T$, both factors in parentheses are nonpositive, by Lemma~\ref{Lem:ModelRr} and~\eqref{Eq:ModelSuperlevels}. For $t>T$, both are nonnegative. 

Finally, combine~\eqref{Eq:ModelAVComparison}, the integrated form of~\eqref{Eq:ModelThresholdVariation}, and the last inequality:
\[
\begin{aligned}
|K|-\Rr_{\rr_\lambda}(T)|\partial K|
&\ge\int_{\S^{n-1}}\bigl(V_r(\Theta)
-\Rr_{\rr_\lambda}(T)A_r(\Theta)\bigr)\,d\omega\\
&\ge\int_{\S^{n-1}}\bigl(V_{\rr_\lambda}(\Theta)
-\Rr_{\rr_\lambda}(T)A_{\rr_\lambda}(\Theta)\bigr)\,d\omega\\
&\ge\sM(\rr_\lambda)-\Rr_{\rr_\lambda}(T)|\partial K|.
\end{aligned}
\]
This gives $|K|\ge\sM(\rr_\lambda)$, with strict inequality if $r>\rr_\lambda$.
\end{proof}

To compare with $L$, we show that $\sM(s)$ is strictly increasing as in the Euclidean case in Lemma~\ref{Lem:M}.

\begin{lemma}
\label{Lem:ModelM}
The function $s\mapsto\sM(s)$ is strictly increasing on its domain of definition.
\end{lemma}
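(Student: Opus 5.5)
We will follow the proof of Lemma~\ref{Lem:M}. The domain of definition is $[r_0,r]$ for geodesic spheres and horospheres, and $[r_0,\min\{r,\rr_\lambda\}]$ in the equidistant case. On the interior of this domain we have $b(s)>0$ and $T=T(s)\in(0,\pi/2)$.

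By rotational symmetry, $\sA(s,T)$ and $\sM(s)$ take the same form as in \eqref{Eq:SimpleA} and \eqref{Eq:SimpleV}, now with the model densities \eqref{Eq:ModelAreaVol}. We have $\partial_T\sA=2|\S^{n-2}|A_s'(T)\int_T^{\pi/2}\sin^{n-2}\theta\,d\theta>0$ by Lemma~\ref{Lem:ModelRr}. The implicit function theorem then makes $T(s)$ differentiable. Next we differentiate \eqref{Eq:ModelAreaConstraint} and \eqref{Eq:ModelCutoffVolume} in $s$, multiply the first by $\Rr_s(T)$ and subtract it from the second. This eliminates $T'(s)$ and gives
\[
\frac{\sM'(s)}{2|\S^{n-2}|}=\int_0^T\bigl(\partial_sV_s-\Rr_s(T)\partial_sA_s\bigr)\sin^{n-2}\theta\,d\theta+\bigl(\partial_sV_s(T)-\Rr_s(T)\partial_sA_s(T)\bigr)\int_T^{\pi/2}\sin^{n-2}\theta\,d\theta .
\]

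The key step is to find exact antiderivatives, as in the Euclidean case. We use \eqref{Eq:ModelParameterDerivatives} together with the $\theta$-derivatives $\ell_s'=a\sn_c(\ell_s)\sin\theta/\cos\phi_s$ and $(\cos\phi_s)'$ from Lemma~\ref{Lem:ModelRr}. The natural guesses are
\[
(\partial_sV_s)\sin^{n-2}\theta=\frac{1}{(n-1)a(s)}\frac{d}{d\theta}\bigl(\sn_c(\ell_s)^{n-1}\sin^{n-1}\theta\bigr)
\]
and
\[
(\partial_sA_s)\sin^{n-2}\theta=\frac{1}{a(s)}\frac{d}{d\theta}\Bigl(\frac{\sn_c(\ell_s)^{n-2}\sin^{n-1}\theta}{\cos\phi_s}\Bigr),
\]
possibly up to correction terms involving $\lambda$ and $c$. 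These would be verified with the identities \eqref{Eq:ModelIdentities}, which play the role of $\ell_s=\cos\phi_s-(1-s)\cos\theta$. If the exact derivatives carry extra terms, I would instead aim for a combined identity for $\partial_s V_s-\Rr_s(T)\partial_s A_s$ evaluated at $\theta$.

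Next we compute the two boundary combinations using \eqref{Eq:ModelRr}. The expected outcome is
\[
\partial_sV_s(T)-\Rr_s(T)\partial_sA_s(T)=\frac{a(s)\,\sn_c(\ell_s(T))^{n-1}\cdot(\text{positive})}{n\cn_c(\ell_s)\cos\phi_s-\lambda\sn_c(\ell_s)}.
\]
The parenthetical simplification should now collapse to something like $a(s)$ or $\cos\phi_s\cdot(\ldots)$ via the third identity in \eqref{Eq:ModelIdentities}, replacing $1-s$. The boundary term from the integral over $[0,T]$ should similarly be a positive multiple of $\cos T\sin^{n-1}T$. This would give $\sM'(s)>0$ for interior $s$.

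The algebraic verification of these antiderivative and collapse identities is the main obstacle. Positivity should then follow from $a(s)>0$, $b(s)>0$, $\cn_c(\ell_s)>0$ and the positive denominator established in Lemma~\ref{Lem:ModelRr}. Continuity of $\sM$ up to the endpoints, including $s=\rr_\lambda$ where $T(\rr_\lambda)<\pi/2$ and the formulas extend by continuity, then yields strict monotonicity on the closed domain.
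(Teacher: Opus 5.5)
Your outline matches the paper's proof step for step: rotational symmetry, the implicit function theorem for $T(s)$, elimination of $T'(s)$ via $\Rr_s(T)$, exact $\theta$-primitives, and simplification of the two boundary coefficients. The gap is that the step you defer as ``the main obstacle'' is the entire new content of this lemma, and the primitives you propose are false.

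The factor $1/a(s)$ is spurious in both. Already for $c=0$, $\lambda=1$, where $a(s)=1-s$, Lemma~\ref{Lem:M} has no such factor. The area primitive is also missing $\cn_c(\ell_s)$. The correct identities are
\[
(\partial_sV_s)\sin^{n-2}\theta=\frac{1}{n-1}\frac{d}{d\theta}\bigl(\sn_c(\ell_s)^{n-1}\sin^{n-1}\theta\bigr),
\qquad
(\partial_sA_s)\sin^{n-2}\theta=\frac{d}{d\theta}\Bigl(\frac{\cn_c(\ell_s)\sn_c(\ell_s)^{n-2}}{\cos\phi_s}\sin^{n-1}\theta\Bigr).
\]
The first is immediate. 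We have $\partial_sV_s=\sn_c(\ell_s)^{n-1}\partial_s\ell_s$. Differentiating the bracket in $\theta$ with $\ell_s'=a\sn_c(\ell_s)\sin\theta/\cos\phi_s$ gives $(n-1)\sn_c(\ell_s)^{n-1}\sin^{n-2}\theta\bigl(\cos\theta+a\cn_c(\ell_s)\sin^2\theta/\cos\phi_s\bigr)$. By \eqref{Eq:ModelParameterDerivatives} this is $(n-1)\sn_c(\ell_s)^{n-1}\sin^{n-2}\theta\,\partial_s\ell_s$. The second identity follows by expanding both sides and using $-c\sn_c^2=\cn_c^2-1$ together with the third identity of \eqref{Eq:ModelIdentities}, in the form $a\cn_c(\ell_s)\cos\theta=\cos\phi_s-b\sn_c(\ell_s)$.

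The factor $\cn_c(\ell_s)$ is exactly what makes the first boundary coefficient collapse. By \eqref{Eq:ModelRr},
\[
\frac{\sn_c(\ell_s)^{n-1}}{n-1}-\Rr_s(T)\frac{\cn_c(\ell_s)\sn_c(\ell_s)^{n-2}}{\cos\phi_s}
=\frac{\sn_c(\ell_s)^{n-1}\bigl(\cn_c(\ell_s)\cos\phi_s-\lambda\sn_c(\ell_s)\bigr)}{(n-1)\bigl(n\cn_c(\ell_s)\cos\phi_s-\lambda\sn_c(\ell_s)\bigr)},
\]
and the first identity of \eqref{Eq:ModelIdentities} turns the numerator into $a(s)\sn_c(\ell_s)^{n-1}\cos T$. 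Without $\cn_c(\ell_s)$ in the primitive, you would get $(n\cn_c(\ell_s)-n+1)\cos\phi_s-\lambda\sn_c(\ell_s)$ instead. This differs from $a(s)\cos T$ by $(n-1)(\cn_c(\ell_s)-1)\cos\phi_s\neq0$ when $c\neq0$, so your hoped-for ``positive multiple of $\cos T\sin^{n-1}T$'' would not appear.

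Your predicted form of the second coefficient is right, and the undetermined ``positive'' factor is exactly $1$. The first identity and $\partial_s(\cos\phi_s)=ab\sin^2\theta/\cos\phi_s$ give
\[
(\cn_c\cos\phi_s-\lambda\sn_c)\partial_s\ell_s+\sn_c\,\partial_s(\cos\phi_s)=a\bigl(\cos\theta\,\partial_s\ell_s+b\sn_c\sin^2\theta/\cos\phi_s\bigr).
\]
Substituting $\partial_s\ell_s$ from \eqref{Eq:ModelParameterDerivatives} and then using the third identity reduces the bracket to $\cos^2\theta+\sin^2\theta$. With these identities supplied, your argument becomes the paper's proof and yields \eqref{Eq:ModelPositiveDerivative}.
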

\begin{proof}
Write $T=T(s)$. By rotational symmetry,
\[
\begin{aligned}
\frac{\sA(s,T)}{2|\S^{n-2}|}
&=\int_0^T A_s(\theta)\sin^{n-2}\theta\,d\theta
+A_s(T)\int_T^{\pi/2}\sin^{n-2}\theta\,d\theta,\\
\frac{\sM(s)}{2|\S^{n-2}|}
&=\int_0^T V_s(\theta)\sin^{n-2}\theta\,d\theta
+V_s(T)\int_T^{\pi/2}\sin^{n-2}\theta\,d\theta.
\end{aligned}
\]
Here $0<T<\pi/2$, and
\[
\partial_T\sA(s,T)=2|\S^{n-2}|A_s'(T)
\int_T^{\pi/2}\sin^{n-2}\theta\,d\theta>0.
\]
Thus $T(s)$ is differentiable by the implicit function theorem. Differentiating the area constraint and $\sM$, then eliminating $T'(s)$ exactly as in Lemma~\ref{Lem:M}, gives
\begin{equation}
\label{Eq:ModelVariation}
\begin{aligned}
\frac{\sM'(s)}{2|\S^{n-2}|}
={}&\int_0^T\bigl(\partial_sV_s(\theta)
-\Rr_s(T)\partial_sA_s(\theta)\bigr)\sin^{n-2}\theta\,d\theta\\
&+\bigl(\partial_sV_s(T)-\Rr_s(T)\partial_sA_s(T)\bigr)
\int_T^{\pi/2}\sin^{n-2}\theta\,d\theta.
\end{aligned}
\end{equation}

We now calculate the two terms in~\eqref{Eq:ModelVariation}. As in the proof of Lemma~\ref{Lem:ModelRr}, the argument $\theta$ in $\ell_s$ and $\phi_s$ is suppressed when no other argument is displayed. Using \eqref{Eq:ModelParameterDerivatives}, we can write the integrands in~\eqref{Eq:ModelVariation} as total derivatives in $\theta$:
\begin{equation}
\label{Eq:ModelTotalDerivatives}
\begin{aligned}
\bigl(\partial_sV_s(\theta)\bigr)\sin^{n-2}\theta
&=\frac1{n-1}\frac{d}{d\theta}
\bigl(\sn_c(\ell_s)^{n-1}\sin^{n-1}\theta\bigr),\\
\bigl(\partial_sA_s(\theta)\bigr)\sin^{n-2}\theta
&=\frac{d}{d\theta}
\left(\frac{\cn_c(\ell_s)\sn_c(\ell_s)^{n-2}}
{\cos\phi_s}\sin^{n-1}\theta\right).
\end{aligned}
\end{equation}
Let us verify both identities. For the volume identity, use
$\ell_s'=a(s)\sn_c(\ell_s)\sin\theta/\cos\phi_s$ to obtain
\[
\begin{aligned}
&\frac1{n-1}\frac{d}{d\theta}
\bigl(\sn_c(\ell_s)^{n-1}\sin^{n-1}\theta\bigr)\\
&\qquad=\sn_c(\ell_s)^{n-1}\sin^{n-2}\theta
\left(\cos\theta+
\frac{a(s)\cn_c(\ell_s)\sin^2\theta}{\cos\phi_s}\right)\\
&\qquad=\sn_c(\ell_s)^{n-1}\partial_s\ell_s\sin^{n-2}\theta
=\partial_sV_s\sin^{n-2}\theta.
\end{aligned}
\]
For the area identity, differentiation of $A_s=\sn_c(\ell_s)^{n-1}/\cos\phi_s$ with respect to $s$ gives
\[
\begin{aligned}
\partial_sA_s
={}&(n-1)\frac{\cn_c(\ell_s)\sn_c(\ell_s)^{n-2}}
{\cos\phi_s}\cos\theta\\
&+\frac{a(s)\sn_c(\ell_s)^{n-2}\sin^2\theta}{\cos^2\phi_s}
\left((n-1)\cn_c(\ell_s)^2
-\frac{b(s)\sn_c(\ell_s)}{\cos\phi_s}\right).
\end{aligned}
\]
On the other hand, the product rule gives, for $\theta>0$,
\[
\begin{aligned}
&\frac1{\sin^{n-2}\theta}\frac{d}{d\theta}
\left(\frac{\cn_c(\ell_s)\sn_c(\ell_s)^{n-2}}
{\cos\phi_s}\sin^{n-1}\theta\right) =(n-1)\frac{\cn_c(\ell_s)\sn_c(\ell_s)^{n-2}}
{\cos\phi_s}\cos\theta\\
&\qquad+\frac{a(s)\sn_c(\ell_s)^{n-2}\sin^2\theta}{\cos^2\phi_s}
\left((n-2)\cn_c(\ell_s)^2-c\sn_c(\ell_s)^2
+\frac{a(s)\cn_c(\ell_s)\cos\theta}{\cos\phi_s}\right).
\end{aligned}
\]
The expressions in parentheses in the last two equations agree. Indeed, by $\cn_c^2+c\sn_c^2=1$ and~\eqref{Eq:ModelIdentities},
\[
\begin{aligned}
&(n-2)\cn_c(\ell_s)^2-c\sn_c(\ell_s)^2+\frac{a(s)\cn_c(\ell_s)\cos\theta}{\cos\phi_s}\\
&\qquad=(n-1)\cn_c(\ell_s)^2-1
+\frac{\cos\phi_s-b(s)\sn_c(\ell_s)}{\cos\phi_s}\\
&\qquad=(n-1)\cn_c(\ell_s)^2
-\frac{b(s)\sn_c(\ell_s)}{\cos\phi_s}.
\end{aligned}
\]
This proves the area identity in~\eqref{Eq:ModelTotalDerivatives}; the identity at $\theta=0$ follows by continuity.

Integrating the two total derivatives in~\eqref{Eq:ModelVariation}, we obtain
\[
\begin{aligned}
\frac{\sM'(s)}{2|\S^{n-2}|}
={}&\left(\frac{\sn_c(\ell_s(T))^{n-1}}{n-1}
-\Rr_s(T)\frac{\cn_c(\ell_s(T))\sn_c(\ell_s(T))^{n-2}}
{\cos\phi_s(T)}\right)\sin^{n-1}T\\
&+\bigl(\partial_sV_s(T)-\Rr_s(T)\partial_sA_s(T)\bigr)
\int_T^{\pi/2}\sin^{n-2}\theta\,d\theta.
\end{aligned}
\]
It remains to simplify the two coefficients. At a general angle $\theta$, formula~\eqref{Eq:ModelRr} gives
\[
\begin{aligned}
&\frac{\sn_c(\ell_s)^{n-1}}{n-1}
-\Rr_s(\theta)\frac{\cn_c(\ell_s)\sn_c(\ell_s)^{n-2}}{\cos\phi_s} =\frac{\sn_c(\ell_s)^{n-1}
\bigl(\cn_c(\ell_s)\cos\phi_s-\lambda\sn_c(\ell_s)\bigr)}
{(n-1)\bigl(n\cn_c(\ell_s)\cos\phi_s-\lambda\sn_c(\ell_s)\bigr)}\\
&\quad=\frac{a(s)\sn_c(\ell_s)^{n-1}\cos\theta}
{(n-1)\bigl(n\cn_c(\ell_s)\cos\phi_s-\lambda\sn_c(\ell_s)\bigr)}.
\end{aligned}
\]
For the second coefficient, differentiate the area density before substituting the derivatives of $\ell_s$ and $\cos\phi_s$. Then~\eqref{Eq:ModelRr} yields
\[
\begin{aligned}
&\partial_sV_s-\Rr_s(\theta)\partial_sA_s\\
&\quad=\frac{\sn_c(\ell_s)^{n-1}}
{n\cn_c(\ell_s)\cos\phi_s-\lambda\sn_c(\ell_s)} \cdot\left(
\bigl(\cn_c(\ell_s)\cos\phi_s-\lambda\sn_c(\ell_s)\bigr)
\partial_s\ell_s+\sn_c(\ell_s)\partial_s(\cos\phi_s)\right)\\
&\quad=\frac{a(s)\sn_c(\ell_s)^{n-1}}
{n\cn_c(\ell_s)\cos\phi_s-\lambda\sn_c(\ell_s)}
\left(\cos\theta\,\partial_s\ell_s
+\frac{b(s)\sn_c(\ell_s)\sin^2\theta}{\cos\phi_s}\right).
\end{aligned}
\]
The equation in parentheses is equal to one. Indeed, by~\eqref{Eq:ModelParameterDerivatives} and~\eqref{Eq:ModelIdentities},
\[
\begin{aligned}
\cos\theta\,\partial_s\ell_s
+\frac{b(s)\sn_c(\ell_s)\sin^2\theta}{\cos\phi_s}
&=\cos^2\theta+
\frac{\sin^2\theta}{\cos\phi_s}
\bigl(a(s)\cn_c(\ell_s)\cos\theta+b(s)\sn_c(\ell_s)\bigr)\\
&=\cos^2\theta+\sin^2\theta=1.
\end{aligned}
\]
Evaluating these two coefficients at $\theta=T$ therefore gives
\begin{equation}
\label{Eq:ModelPositiveDerivative}
\begin{aligned}
\frac{\sM'(s)}{2|\S^{n-2}|}
={}&\frac{a(s)\sn_c(\ell_s(T))^{n-1}}
{n\cn_c(\ell_s(T))\cos\phi_s(T)-\lambda\sn_c(\ell_s(T))}\\
&\quad\cdot\left(\int_T^{\pi/2}\sin^{n-2}\theta\,d\theta
+\frac{\cos T\sin^{n-1}T}{n-1}\right)>0.
\end{aligned}
\end{equation}
When the endpoint $\rr_\lambda$ is included in the domain of $\sM$, we have $\sM(s)\to\sM(\rr_\lambda)$ as $s \to \rr_\lambda$. This completes the proof. 
\end{proof}

If $b(r)>0$, Theorem~\ref{Thm:ModelKeyA} and Lemma~\ref{Lem:ModelM} give
\[
|K|\ge\sM(r)\ge\sM(r_0)=|L|.
\]
If $r>r_0$, the second inequality is strict. If $r=r_0$, the equality case in Theorem~\ref{thm:reverse-inradius} implies that $K$ is a $\lambda$-convex lens.
In the equidistant case with $r\ge\rr_\lambda$, the same results yield
\[
|K|\ge\sM(\rr_\lambda)>\sM(r_0)=|L|,
\]
since $r_0<\rr_\lambda$. This proves the Main Theorem in spherical and hyperbolic spaces for $\lambda$-convex polytopes. The approximation argument completes the proof for arbitrary $\lambda$-convex bodies.

\bigskip

\noindent
{\bf Acknowledgments.} 
We thank Julian Scheuer and Deniz Hamdy for discussions regarding the reverse isoperimetric problem. Julian informed us of Deniz’s proposed solution to the problem in Euclidean space, obtained with the assistance of LLM. Our proof was developed independently of Hamdy’s argument, which we have not verified.

\bigskip

\noindent
{\bf AI assistance.} ChatGPT was used in exploratory discussions and in the preparation and editing of this manuscript.

\end{document}